\newtheorem{Th}{Theorem}[section]
\newtheorem{Prop}[Th]{Proposition}
\newtheorem{Lem}[Th]{Lemma}
\newtheorem{Cor}[Th]{Corollary}
\theoremstyle{definition}
\theoremstyle{remark}
\newtheorem{Rem}[Th]{Remark}
\newcommand{\R}{\mathbb{R}}
\newcommand{\rn}{\R^N}
\newcommand{\hrn}{H^1(\rn)}
\newcommand{\cS}{\mathcal{S}}
\newcommand{\cD}{\mathcal{D}}
\newcommand{\cC}{\mathcal{C}}
\newcommand{\cO}{\mathcal{O}}
\newcommand{\cG}{\mathcal{G}}
\numberwithin{equation}{section}
\title[Normalized ground states to an $L^2$-(sub)critical Schr\"odinger system]{Normalized ground states to a cooperative system of Schr\"odinger equations with generic $L^2$-subcritical or $L^2$-critical nonlinearity}
\author[Jacopo Schino]{Jacopo Schino}
\address{Institute of Mathematics of the Polish Academy of Sciences
\newline\indent ul. \'Sniadeckich 8, 00-656 Warsaw, Poland
\newline and
\newline\indent Department of Mathematics, North Carolina State University
\newline\indent 2311 Stinson Dr, 27607 Raleigh, NC, USA}
\email{\href{mailto:jschino@ncsu.edu}{jschino@ncsu.edu}}
\subjclass[2010]{Primary: 35Q40, 35Q60; Secondary: 35J20, 78A25.}
\keywords{Nonlinear field equations, nonlinear Schr\"odinger equations, normalized solutions, ground state solutions, $L^2$-subcritical and $L^2$-critical cases}
\begin{document}
	\maketitle
	
\begin{abstract}
We look for ground state solutions to the Schr\"odinger-type system
\[
\begin{cases}
-\Delta u_j+\lambda_ju_j=\partial_jF(u)\\
\int_{\rn}u_j^2\,dx=a_j^2\\
(\lambda_j,u_j)\in\R\times\hrn
\end{cases}
j\in\{1,\dots,M\}
\]
with $N,M\ge1$, where $a=(a_1,\dots,a_M)\in]0,\infty[^M$ is prescribed and $(\lambda,u)=(\lambda_1,\dots,\lambda_M,u_1,\dots u_M)$ is the unknown. We provide generic assumptions about the nonlinearity $F$ which correspond to the $L^2$-subcritical and $L^2$-critical cases, i.e., when the energy is bounded from below for all or some values of $a$. Making use of a recent idea, we minimize the energy over the constraint $\Set{\left|u_j\right|_{L^2}\le a_j \text{ for all } j}$ and then provide further assumptions that ensure $|u_j|_{L^2}=a_j$.
\end{abstract}

\section{Introduction and statement of the results}

We study the problem
\begin{equation}\label{e-main}
	\begin{cases}
		-\Delta u_j+\lambda_ju_j=\partial_jF(u)\\
		\int_{\rn}u_j^2\,dx=a_j^2\\
		(\lambda_j,u_j)\in\R\times\hrn
	\end{cases}
	j\in\{1,\dots,M\}
\end{equation}
with $N,M\ge1$. Here $a=(a_1,\dots,a_M)\in]0,\infty[^M$ is prescribed, while $(\lambda,u)=(\lambda_1,\dots,\lambda_M,u_1,\dots u_M)$ is the unknown.

Problems as \eqref{e-main} arise when standing waves solutions to Schr\"odinger-type systems
\begin{equation}\label{e-time}
	\begin{cases}
		\mathrm{i}\partial_t\Phi_1-\Delta\Phi_1=\partial_1F(\Phi)\\
		\cdots\\
		\mathrm{i}\partial_t\Phi_M-\Delta\Phi_M=\partial_MF(\Phi)
	\end{cases}
	\Phi=(\Phi_1,\dots,\Phi_M)
\end{equation}
are searched for, i.e., $\Phi_j(t,x)=e^{-\mathrm{i}\lambda_jt}u_j(x)$, with $F(\Phi) = F(|\Phi_1|,\dots,|\Phi_M|)$. Systems as \eqref{e-time} describe natural phenomena in several areas of Physics such as nonlinear optics \cite{Akozbek,SluEgg} and Bose-Einstein condensation \cite{LSSY,PiSt}. In these fields, not only are the masses $\int_{\rn}u_j^2\,dx$ conserved in time (together with the energy, see the functional $J$ below) \cite{Cazenave:book,CazeLions}, but they also have a precise physical meaning, i.e., the power supplies and the total numbers of atoms respectively. It makes therefore sense to introduce the $L^2$ constraints in \eqref{e-main}. With this approach, $\lambda$ in the equation appears as (an $M$-tuple of) Lagrange multipliers; from a physical point of view it represents the chemical potentials of the standing waves.

Under standard assumptions about $F$ it is shown that the solutions to \eqref{e-main} are the critical points of the $\cC^1$ energy functional
\[
J\colon u\in\hrn^M\mapsto\int_{\rn}\frac12|\nabla u|^2-F(u)\,dx\in\R
\]
restricted to the $\cC^1$ manifold
\begin{equation*}
	\cS := \Set{u\in\hrn^M | \int_{\rn }u_j^2 \, dx = a_j^2 \text{ for every } j\in\{1,\dots,M\}}.
\end{equation*}

The case when $J|_{\cS}$ is bounded from below for all $a\in]0,\infty[^M$ is known in the literature as $L^2$-subcritical (or mass-subcritical), while the case when it is unbounded from below for all $a$ is known as $L^2$-supercritical (or mass-supercritical); if such property of being (un)bounded from below depends on $a$, the case is referred to as $L^2$-critical (or mass-critical). When $F$ is of power type, i.e., $F(u)=c|u|^p$ for some $c>0$ and $2<p<2^*$ (let us consider $M=1$ for simplicity), these cases correspond to the exponent $p$ being, respectively, less than, greater than, or equal to the threshold value
\[
2_\#:=2+\frac4N,
\]
known in the literature as the $L^2$-critical (or mass-critical) exponent. As usual, $2^*=2N/(N-2)$ if $N\ge3$, $2^*=\infty$ if $N\in\{1,2\}$. We point out that the value $2_\#$ plays a relevant role also in problems without prescribed $L^2$ norm, e.g., the orbital stability of the ground state solutions to \eqref{e-main} (cf. \cite{CazeLions}).

In this paper, we provide some mild assumptions on the nonlinearity $F$ which correspond to the $L^2$-subcritical and $L^2$-critical cases and ensure the existence of a ground state solution to \eqref{e-main}. If $(\lambda,u)\in\R^M\times\hrn^M$ is a solution to \eqref{e-main}, we call it a \textit{ground state solution} (\textit{ground state} in short) if and only if $u$ minimizes $J$ over the set
\[
\cD := \Set{v\in\hrn^M | \int_{\rn} v_j^2 \, dx \le a_j^2 \text{ for every } j\in\{1,\dots,M\}},
\]
a stronger requirement that the seemingly more natural one that a ground state minimizes $J|_{\cS}$.

Now we list our assumptions about the nonlinearity.
\begin{itemize}
	\item [(F0)] $F\in\cC^1(\R^M)$ and
	\begin{itemize}
		\item if $N=1$, there exists $S>0$ such that $|\nabla F(u)| \le S|u|$ for every $u\in[-1,1]^M$;
		\item if $N=2$, for every $b>0$ there exists $S_b>0$ such that $|\nabla F(u)| \le S_b\bigl(|u| + e^{b|u|^2}-1\bigr)$ for every $u\in\R^M$;
		\item if $N\ge3$, there exists $S>0$ such that $|\nabla F(u)| \le S(|u|+|u|^{2^*-1})$ for every $u\in\R^M$.
	\end{itemize}
	\item [(F1)] $\displaystyle\eta_\infty:=\limsup_{|u|\to\infty}\frac{F(u)}{|u|^{2_\#}}<\infty$.
	\item [(F2)] $\displaystyle\lim_{u\to0}\frac{F(u)}{|u|^2}=0$.
	\item [(F3)] $\displaystyle\eta_0:=\liminf_{u\to0}\frac{F(u)}{|u|^{2_\#}}>0$.
\end{itemize}

In (F3) the case $\eta_0=\infty$ is allowed. When $N\ge5$ and $M\ge2$, we consider the following assumption for a function $f\colon[0,\infty[\to[0,\infty[$, which will be needed for some of our results (in particular, (P) will be required for the derivatives of the additive terms of $F$ restricted to $[0,\infty[$, cf. Theorem \ref{T-main2} below).
\begin{itemize}	
	\item [(P)] There exists $q\le N/(N-2)$ such that $\liminf_{t\to0^+}f(t)/t^q>0$.
\end{itemize}

We will also need hypotheses about the $M$-tuple of radii $a$, i.e.,
\begin{eqnarray}
	\label{e-etas} 2\eta_\infty C_{N,2_\#}^{2_\#}|a|^{4/N}<1,\\
	\label{e-etal} 2\eta_0C_{N,2_\#}^{2_\#}M^{2/N}\min_{1\le j\le M}a_j^{4/N}>1,
\end{eqnarray}
where $C_{N,2_\#} > 0$ is defined in \eqref{e-GN}. Of course, if $\eta_\infty=0$ (resp. $\eta_0=\infty)$, then \eqref{e-etas} (resp. \eqref{e-etal}) is automatically satisfied.

The problem of finding reasonable assumptions for the existence of solutions to \eqref{e-main} in the $L^2$-subcritical (sometimes $L^2$-critical) case dates back to the work of T. Cazenave \& P.-L. Lions \cite{CazeLions} and C. A. Stuart \cite{Stuart82}, for $M=1$, and P.-L. Lions \cite{Lions84_2}, for $M\ge1$. More recently, it was dealt with by L. Jeanjean \& S.-S. Lu \cite{JeanLu} for $M=1$, who considered the issue of (infinitely many) nonradial solutions, and by T. Bartsch \& L. Jeanjean \cite{BarJean} for $M=2$, but only with explicit power type nonlinearities and without considering the $L^2$-critical case. Still concerning recent times, the relative compactness of minimizing sequences, which is closely connected with the orbital stability, has been given some attention too, e.g., in \cite{Ikoma,Shibata_2014} for $M=1$ and \cite{GouJean,Shibata_2017} for $M\ge2$. See also the references therein for a more complete bibliography. Most of these recent papers rely on some features of the ground state energy map or Palais-Smale sequences with additional properties in the spirit of \cite{BerLionsII}, which makes the arguments more involved than the one used here. In both cases, one of the main struggles consists in proving that limit points of weakly convergent sequences maintain the $L^2$ norm because the embedding $\hrn\hookrightarrow L^2(\rn)$ is \textit{not} compact, not even when one restricts to radially symmetric functions.

Due to this lack of compactness, we do not know, in general, if a bounded sequence in $H^1(\rn)$ converges strongly in $L^2(\rn)$ up to a subsequence. In particular, the limit of a weakly convergent sequence in $\cS$ need not belong to $\cS$. For this reason we work with the set $\cD$ and, clearly, the limit of a weakly convergent sequence in $\cD$ belongs to $\cD$. Then it is easier to prove that such a weak limit point is a minimizer, which gives additional information to use when we prove that, in fact, it belongs to $\cS$: this is more delicate when $M\ge2$, basically because $\cS \subsetneq \partial \cD$, in contrast with the case $M=1$, where $\cS = \partial\cD$. Therefore, a second aim of this paper is to explore how choosing $\cD$ over $\cS$ can improve the previous work on this topic.

The idea of working with the set $\cD$ was introduced in \cite{BiegMed} for a single equation in the $L^2$-critical or -supercritical (and Sobolev-subcritical) case and then used in \cite{MedSc} for a system of equations in a broader regime, i.e., allowing Sobolev-critical nonlinearities. More in details, since in that setting the energy functional is unbounded from below, the authors make use of a natural constraint of Nehari-Poho\v{z}aev type that allows to recover such boundedness. In addition, in \cite{MedSc} a second advantage of working with $\cD$ is exploited, i.e., the Lagrange multipliers coming from this constraint are nonnegative because associated with a minimizer. This property, which is used here as well and is based on a result of Clarke's \cite{Clarke}, is helpful when one has no other ways to obtain information about the Lagrange multipliers.

Before stating our results, we recall the best constant $C_{N,p} > 0$ in the Gagliardo--Nirenberg inequality
\begin{equation}\label{e-GN}
	|u|_p\le C_{N,p}|u|_2^{1-\delta_p}|\nabla u|_2^{\delta_p} \text{ for every } u\in\hrn
\end{equation}
with $2<p<2^*$ and $\delta_p=N\left(\frac12-\frac1p\right)$. Note that
\[
p\delta_p=N\left(\frac{p}{2}-1\right)\begin{cases}
	<2 \quad \text{ if } 2<p<2_\#\\
	=2 \quad \text{ if } p=2_\#\\
	>2 \quad \text{ if } 2_\#<p<2^*
\end{cases}
\]
and $0<\delta_p<1$ because $2<p<2^*$.

\begin{Th}\label{T-main1}
	If $M=1$ and (F0)--(F3), \eqref{e-etas}, and \eqref{e-etal} hold, then there exists a solution $(\lambda,u)\in]0,\infty[\times\cS$ to \eqref{e-main} such that $0 > J(u) = \inf_{\cD}J = \inf_{\cS}J$.\\
	If, moreover, $N\ge2$, $F'$ is locally Lipschitz continuous, or $F$ is even, then every minimizer of $J|_{\cD}$ has constant sign and, up to a translation, is radial and radially monotone.
\end{Th}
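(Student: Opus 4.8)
The plan is to prove Theorem~\ref{T-main1} in three stages: (i) show $\inf_{\cD}J$ is finite and negative and that minimizing sequences are bounded; (ii) produce a minimizer of $J|_{\cD}$ via a concentration-compactness / weak-limit argument and identify the associated Lagrange multiplier(s); (iii) upgrade the minimizer to lie on $\cS$ (i.e.\ $|u|_{L^2}=a$) and extract the sign / symmetry / monotonicity statements. Throughout, $M=1$ simplifies matters considerably: since $\cS=\partial\cD$, once we know a minimizer $u$ of $J|_{\cD}$ is nonzero it is almost immediate that $|u|_{L^2}=a$, because any $v$ with $|v|_{L^2}<a$ can be dilated outward (replace $v(x)$ by $v(x/\theta)$ with $\theta>1$ suitably, or rather rescale to increase the mass) in a way that does not increase $J$, exploiting (F3) and \eqref{e-etal}.

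For stage (i), I would estimate $F(u)$ from above using (F0)--(F2): away from the origin and infinity $F(u)\le c|u|^{2_\#}+(\text{lower order})$, and by \eqref{e-etas} together with the Gagliardo--Nirenberg inequality \eqref{e-GN} with $p=2_\#$ (recall $2_\#\delta_{2_\#}=2$), one gets for $u\in\cD$
\[
\int_{\rn}F(u)\,dx\le \eta_\infty C_{N,2_\#}^{2_\#}|u|_2^{4/N}|\nabla u|_2^2+o(|\nabla u|_2^2)\le\Bigl(\tfrac12-\varepsilon\Bigr)|\nabla u|_2^2+C,
\]
so $J(u)\ge \varepsilon|\nabla u|_2^2-C$, giving boundedness of the gradients along minimizing sequences (and $|u|_2\le|a|$ is free on $\cD$); hence $\inf_\cD J>-\infty$. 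To see $\inf_\cD J<0$, I plug in a rescaled bump: for $w$ fixed with $|w|_2=a$ consider $w_t(x)=t^{N/2}w(tx)$, note $J(w_t)=\tfrac{t^2}2|\nabla w|_2^2-\int F(w_t)$; as $t\to0^+$ the mass concentrates at amplitude $\to0$, and by (F3)/\eqref{e-etal} the term $\int F(w_t)$ dominates $t^2$ (both scale like $t^2$ but the coefficient $2\eta_0C_{N,2_\#}^{2_\#}|a|^{4/N}>1$ wins), so $J(w_t)<0$ for small $t$. This is the step where \eqref{e-etal} enters decisively.

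For stage (ii), let $(u_n)\subset\cD$ be minimizing. After passing to a subsequence I would like $u_n\rightharpoonup u$ in $\hrn$ with $u\in\cD$ (weak lower semicontinuity of the $L^2$ ball), and I must rule out vanishing and dichotomy. Vanishing is excluded because $\inf_\cD J<0$ forces $\int F(u_n)$ to stay bounded below away from $0$, which via (F0)--(F2) and Lions' lemma prevents $u_n\to0$ in $L^p$ for the relevant $p$. Then a concentration argument (translating the $u_n$) yields a nonzero weak limit $u$; by weak lower semicontinuity of $J$ on $\cD$ (the $\int F$ part is weakly continuous modulo the subcritical growth, the gradient part is w.l.s.c.) $u$ is a minimizer. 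Here is where I invoke the Clarke-type result cited in the introduction: because $u$ minimizes $J$ over the convex-ish constraint set $\cD$ defined by $|u|_2^2\le a^2$, there is a Lagrange multiplier $\lambda\ge0$ with $-\Delta u+\lambda u=F'(u)$. The nonnegativity of $\lambda$ is the key structural gain from using $\cD$.

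Stage (iii): if $|u|_2<a$ then $u$ lies in the interior of $\cD$, so $\lambda=0$ and $-\Delta u=F'(u)$; testing against $u$ and using the Pohozaev identity together with (F1)--(F3) and the strict inequalities \eqref{e-etas}--\eqref{e-etal} should force $u=0$ or $J(u)\ge0$, contradicting $J(u)=\inf_\cD J<0$. (Concretely: with $\lambda=0$ one cannot have the subcritical balance unless $u\equiv 0$.) Hence $|u|_2=a$, i.e.\ $u\in\cS$, and then $\inf_\cD J=\inf_\cS J$ since $\cS\subset\cD$; also $\lambda>0$ follows once we know $\lambda\ne 0$ from the equation (if $\lambda=0$ we would again reach a contradiction via Pohozaev, so $\lambda>0$). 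Finally, for the symmetry/sign statement under the extra hypotheses: replacing $u$ by $|u|$ does not increase $J$ when $F$ is even, and under local Lipschitz continuity of $F'$ (or $N\ge2$) the strong maximum principle applied to the equation $-\Delta|u|+\lambda|u|\ge 0$ gives $|u|>0$, so $u$ has constant sign; Schwarz symmetrization / Polya--Szeg\H{o} then shows the symmetric decreasing rearrangement $u^*$ satisfies $J(u^*)\le J(u)$ with $|u^*|_2=|u|_2$, forcing $u$ to equal $u^*$ up to translation, hence radial and radially nonincreasing.

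The main obstacle I expect is stage (ii)'s compactness, specifically excluding \emph{dichotomy} of the minimizing sequence: one must show the energy is strictly subadditive under splitting of the mass, i.e.\ $\inf_{\cD(a)}J<\inf_{\cD(a')}J+\inf_{\cD(a'')}J$ whenever $a'+a''$ ``equals'' $a$ in the appropriate sense, which typically requires the scaling behaviour coming from (F3) and the strict inequality in \eqref{e-etal}. A secondary technical point is verifying weak continuity of $u\mapsto\int_{\rn}F(u)$ along the (translated) minimizing sequence despite $F$ only having the borderline growth allowed by (F0) --- this needs a Brezis--Lieb type splitting together with the fact that the ``$2_\#$-part'' of the energy, being exactly mass-critically scaling-invariant, cannot leak to infinity once \eqref{e-etas} holds with strict inequality. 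The sign/symmetry addendum is comparatively routine given standard rearrangement and maximum-principle tools, modulo checking that the hypotheses ``$N\ge2$, or $F'$ locally Lipschitz, or $F$ even'' are exactly what is needed to run the strong maximum principle (for $N=1$ one needs the ODE uniqueness provided by local Lipschitz continuity, while for $N\ge2$ the weak Harnack inequality suffices).
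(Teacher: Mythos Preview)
Your three-stage plan coincides with the paper's proof: Lemmas \ref{L-cbb} and \ref{L-neg} give stage (i), Lemmas \ref{L-subadd}--\ref{L-min} give stage (ii) (including the strict subadditivity you flag as the main obstacle), and the Poho\v{z}aev argument plus \cite[Proposition A.1]{MedSc} gives stage (iii). Two details, however, are looser in your sketch than you may realise.

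First, in the negativity step you write ``for $w$ fixed with $|w|_2=a$ \dots\ the coefficient $2\eta_0C_{N,2_\#}^{2_\#}|a|^{4/N}>1$ wins''. This does \emph{not} work for an arbitrary such $w$: the Gagliardo--Nirenberg inequality gives only the upper bound $|w|_{2_\#}^{2_\#}\le C_{N,2_\#}^{2_\#}a^{4/N}|\nabla w|_2^2$, which goes the wrong way for proving $\frac12|\nabla w|_2^2<\eta_0|w|_{2_\#}^{2_\#}$. The paper (Lemma \ref{L-neg}) chooses $w$ to be a rescaling of the Gagliardo--Nirenberg extremal, so that the inequality is saturated and \eqref{e-etal} becomes exactly the condition needed.

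Second, for the sign/symmetry addendum the theorem asserts that \emph{every} minimizer of $J|_{\cD}$ is radial up to translation, not merely that a radial minimizer exists. Your argument ``$J(u^*)\le J(u)$, forcing $u=u^*$'' only yields that $u^*$ is another minimizer; deducing $u=u^*$ (up to translation) from $J(u^*)=J(u)$ requires the equality case in P\'olya--Szeg\H{o}, which in turn needs a regularity/nondegeneracy input (e.g.\ that level sets of the minimizer have zero measure). The paper does not spell this out either but defers to \cite{JeanLu2} when $N\ge2$ or $F'$ is locally Lipschitz; only in the even case does plain Schwarz rearrangement suffice, and even then just for existence of a symmetric minimizer. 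Your reasoning for constant sign via the strong maximum principle is on the right track but also needs the equation for $|u|$ rather than the inequality you wrote.
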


Theorem \ref{T-main1} refines \cite[Theorem 1.1 (ii)]{JeanLu2} in the sense that \eqref{e-etal} gives an explicit (in terms of the Gagliardo--Nirenberg constant $C_{N,2_\#}$) lower bound on $a$ for solutions to exist; on the other hand, we do not know whether this bound is optimal.

The more one assumes about $F$, the more can be said about $u$. In particular, we have as follows.
\begin{Prop}\label{P-decreasing}
	Let the assumptions of Theorem \ref{T-main1} hold, with $N\ge2$, $F'$ locally Lipschitz continuous, or $F$ even, and let $(\lambda,u)$ be given therein. If $F$ is nondecreasing on $[0,\infty[$ and nonincreasing on $[-\infty,0]$, then $|u|>0$. If, moreover, there exist $t_0,t^0>0$ such that $F'(t)\le\lambda t$ for every $t\in[0,t_0]$, $F'(t)>\lambda t$ for every $t>t_0$, $F'(t) \ge \lambda t$ for every $t\in[-t^0,0]$, and $F'(t)<\lambda t$ for every $t<-t^0$, then $u$ is radially strictly monotone.
\end{Prop}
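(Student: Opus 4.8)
The plan is to combine the pointwise sign information on $F'$ given by the new hypotheses with the qualitative properties of $u$ provided by Theorem \ref{T-main1}. By that theorem $u$ has constant sign and, up to a translation, is radial and radially monotone; a nonnegative function in $\hrn$ that is radially monotone is in fact radially nonincreasing, and since the whole statement is invariant under the substitution $(u,F,t_0,t^0)\mapsto(-u,F(-\cdot),t^0,t_0)$, I may and do assume $u\ge0$, radial and radially nonincreasing. By (F0) and elliptic regularity $u$ is a classical solution, say $u\in\cC^1(\rn)\cap W^{2,p}_{\mathrm{loc}}(\rn)$ for every finite $p$. For the first assertion, $F$ nondecreasing on $[0,\infty[$ forces $F'(t)\ge0$ for every $t\ge0$, hence $F'(u)\ge0$, so $u$ is a nonnegative supersolution of $-\Delta+\lambda$ with $\lambda>0$; since $u\not\equiv0$ (because $\int_{\rn}u^2\,dx=a_1^2>0$), the strong maximum principle yields $u>0$ in $\rn$, i.e. $|u|>0$.

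For the second assertion, write $u(x)=v(|x|)$ and set $g(t):=\lambda t-F'(t)$, so that the additional hypotheses say precisely that $g\ge0$ on $[0,t_0]$ and $g<0$ on $]t_0,\infty[$. The equation becomes $(r^{N-1}v')'=r^{N-1}g(v)$ on $]0,\infty[$ with $v'(0)=0$; moreover $v\ge0$ is nonincreasing and $v(r)\to0$ as $r\to\infty$, since otherwise $v$ would be bounded below by a positive constant and $u$ would not lie in $L^2(\rn)$. First I exclude $v(0)\le t_0$: in that case $v\le t_0$ everywhere, hence $g(v)\ge0$, hence $r\mapsto r^{N-1}v'(r)$ is nondecreasing with limit $0$ at the origin, hence nonnegative; together with $v'\le0$ this makes $v$ constant, so $v\equiv0$, which is absurd. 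Therefore $v(0)>t_0$, so $r_1:=\inf\Set{r>0 | v(r)\le t_0}$ satisfies $0<r_1<\infty$ and $v(r_1)=t_0$. On $]0,r_1[$ one has $v>t_0$, hence $r^{N-1}v'$ is strictly decreasing from the value $0$ at the origin, so $v'<0$ on $]0,r_1[$ and, by continuity, $r_1^{N-1}v'(r_1)<0$.

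Finally, on $]r_1,\infty[$ one has $v\le v(r_1)=t_0$, hence $g(v)\ge0$ and $r\mapsto r^{N-1}v'(r)$ is nondecreasing; if it vanished at some $r_2>r_1$ it would remain nonnegative on $[r_2,\infty[$, forcing $v'\equiv0$ and hence $v\equiv v(r_2)$ there, which with $v(r)\to0$ gives $v(r_2)=0$ and contradicts $|u|>0$. Hence $v'<0$ on $]r_1,\infty[$ too, so $v'<0$ throughout $]0,\infty[$ and $u$ is radially strictly monotone. I expect the tail $]r_1,\infty[$, where $v<t_0$, to be the main obstacle: there the sign of $g(v)$ is the unfavourable one, so strict monotonicity cannot be obtained by a purely local comparison, and one must play the monotonicity of $r^{N-1}v'$ against the two global facts $v(r)\to0$ and $|u|>0$ — which is exactly why the two assertions have to be proved in this order. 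The remaining ingredients, namely the regularity needed to invoke the strong maximum principle and to treat the radial ODE classically, are routine and guaranteed by (F0).
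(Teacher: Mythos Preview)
Your proof is correct. For the first assertion both you and the paper argue identically (nonnegative supersolution of $-\Delta+\lambda$ together with the strong maximum principle). For the strict monotonicity, however, the paper takes a different route: it argues by contradiction that if $u$ were constant on some annulus $A=\{r_1<|x|<r_2\}$, then $0=-\Delta u=F'(u)-\lambda u$ on $A$, forcing $u|_A\le t_0$; since $u$ is radially nonincreasing this gives $u\le t_0$ on $\Omega:=\{|x|>r_1\}$, hence $-\Delta u=F'(u)-\lambda u\le0$ throughout $\Omega$, and as $u$ attains its maximum over $\overline\Omega$ at every point of $A$, the strong maximum principle makes $u$ constant on the connected set $\Omega$ (on $\{x\ge r_1\}$ when $N=1$), contradicting $u\in L^2$ together with $u>0$. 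Your direct ODE analysis of $r\mapsto r^{N-1}v'(r)$ is longer but more explicit: it yields the extra information $u(0)>t_0$ and makes transparent exactly why the two global facts $v(r)\to0$ and $|u|>0$ must both be invoked on the tail $\{r>r_1\}$, whereas the paper packages all of this into a single application of the strong maximum principle on the exterior domain.
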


\begin{Rem}
	Solutions $(\lambda,u)$ to \eqref{e-main} where $u$ is nonnegative (resp. nonpositive) exist also if, in addition to the assumptions of the first part of Theorem \ref{T-main1}, $F'(t) \ge 0$ for all $t\le0$ (resp. $F'(t) \le 0$ for all $t\ge0$) because, denoting by $u_- := \max\{-u,0\}$ the negative part of $u$ (resp. by $u_+ := \max\{u,0\}$ the positive part of $u$),
	\[\begin{split}
		0 \ge -|\nabla u_-|_2^2 - \lambda |u_-|_2^2 & = - \int_{\rn} |\nabla u_-|^2 + \lambda u_-^2 \, dx = \int_{\rn} \nabla u \cdot \nabla u_- + \lambda uu_- \, dx\\
		& = \int_{\rn} F'(u)u_- \, dx = \int_{\rn} F'(-u_-)u_- \, dx \ge 0
	\end{split}\]
	(resp. $0 \le |\nabla u_+|_2^2 + \lambda |u_+|_2^2 \le 0$), although there is no information about the symmetry of $u$. In this case, we still have that $|u|>0$ if $F$ is nondecreasing on $[0,\infty[$ (resp. nonincreasing on $]-\infty,0]$).
\end{Rem}

Concerning the existence of nonradial solutions, we have the following result. When $N\ge4$, fix $2\le K\le N/2$ and consider $\tau\in\cO(N)$ defined by $$\tau(x_1,x_2,x_3)=(x_2,x_1,x_3)$$ for every $x=(x_1,x_2,x_3)\in\R^K\times\R^K\times\R^{N-2K}=\rn$. Define
\[
H_\tau:=\Set{u\in\hrn|u=-u(\tau\cdot)}
\]
and note that the only radial function contained in $H_\tau$ is the trivial one. Finally, let
\[
X:=\Set{u\in H_\tau|u=u(g\cdot) \text{ for every } g\in\cO(K)\times\cO(K)\times \{I_{N-2K}\}},
\]
which a fortiori does \textit{not} contain any nontrivial radial functions. When $2K=N$, we agree that the component $x_3$ in the definition of $\tau$ and the identity matrix $I_{N-2K}$ in the definition of $X$ do not appear.

\begin{Prop}\label{P-main}
	If $N\ge4$, $M=1$, $F$ is even, (F0)--(F3) and \eqref{e-etas} hold, and $\eta_0=\infty$, then there exists a solution $(\mu,v) \in ]0,\infty[ \times (\cS\cap X)$ to \eqref{e-main} such that $0 > J(v) = \inf_{\cD \cap X}J = \inf_{\cS \cap X}J > \inf_{\cD}J = \inf_{\cS}J$.
\end{Prop}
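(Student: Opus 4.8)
The plan is to run the proof of Theorem~\ref{T-main1} essentially verbatim, but carried out inside the closed subspace $X\subset\hrn$, and then to exploit the rigidity of $X$ for the strict inequality. Let $\cG$ be the compact group of linear isometries of $\hrn$ generated by the maps $u\mapsto u(g\cdot)$ for $g\in\cO(K)\times\cO(K)\times\{I_{N-2K}\}$ and by $u\mapsto-u(\tau\cdot)$; then $X=\mathrm{Fix}(\cG)$, the sets $\cS$ and $\cD$ are $\cG$-invariant, and $J$ is $\cG$-invariant \emph{precisely because $F$ is even} (the sign in the $\tau$-part is absorbed by $F(-\cdot)=F(\cdot)$). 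Hence the principle of symmetric criticality applies: critical points of $J|_X$, of $J|_{\cD\cap X}$, or of $J|_{\cS\cap X}$ are critical points of $J$, of $J|_\cD$, or of $J|_\cS$, respectively. Write $c_X:=\inf_{\cD\cap X}J$ and $c:=\inf_\cD J$.

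First I would show $-\infty<c_X<0$. The lower bound for $J$ on $\cD$ established in Theorem~\ref{T-main1} (through (F0), (F1), \eqref{e-etas} and \eqref{e-GN}) is valid on all of $\hrn$, hence $c_X>-\infty$. For $c_X<0$ one cannot merely rescale a near-extremal of \eqref{e-GN}, since those are radial while $X$ contains no nontrivial radial function — this is exactly where $\eta_0=\infty$ (rather than only \eqref{e-etal}) is used. Take $\psi\in\cC_c^\infty(\rn)$ with $\psi\ge0$, $\psi\not\equiv0$ and $\mathrm{supp}\,\psi\subset\{x:|x_1|>|x_2|\}$, average $\psi$ over $\cO(K)\times\cO(K)\times\{I_{N-2K}\}$ (a group under which that region is invariant), and put $v_0:=\psi-\psi(\tau\cdot)$, which lies in $X$ and whose two summands have disjoint supports; then normalise so that $|v_0|_2=a$. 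For the $L^2$-preserving dilation $v_0^t(x):=t^{N/2}v_0(tx)\in\cS\cap X$, using that $\eta_0=\infty$ provides, for every $A>0$, some $\rho>0$ with $F(u)\ge A|u|^{2_\#}$ whenever $|u|\le\rho$, one gets for $t$ small enough (so that $\|v_0^t\|_\infty\le\rho$)
\[
J(v_0^t)\le\frac{t^2}{2}|\nabla v_0|_2^2-A\,t^2|v_0|_{2_\#}^{2_\#},
\]
which is negative once $A$ is chosen large; hence $c_X\le\inf_{\cS\cap X}J<0$.

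Next comes compactness of minimising sequences — the hard part. Let $(v_n)\subset\cD\cap X$ be minimising for $c_X$; it is bounded in $\hrn$, so along a subsequence $v_n\rightharpoonup v\in\cD\cap X$, and it suffices to prove $\int_{\rn}F(v_n)\,dx\to\int_{\rn}F(v)\,dx$, which by a Brezis--Lieb argument and (F0)--(F2) (as in the proof of Theorem~\ref{T-main1}) follows once $v_n\to v$ in $L^p(\rn)$ for $2<p<2^*$. When $2K=N$ the group $\cO(K)\times\cO(K)$ acts on $\rn=\R^K\times\R^K$ with $\{0\}$ as its only finite orbit, so the Lions--Bartsch--Willem lemma gives that $X\hookrightarrow L^p(\rn)$ is compact for $2<p<2^*$ and we are done. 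When $2K<N$ this embedding fails — $L^p$-mass may escape along the $\R^{N-2K}$ factor — and I would argue as in Theorem~\ref{T-main1}: translations in the last $N-2K$ variables preserve $X$, so after recentring one runs the concentration--compactness dichotomy, excluding vanishing via $c_X<0$ and dichotomy via the strict subadditivity of the level $c_X$ (as a function of $a$), again obtaining $\int F(v_n)\to\int F(v)$. Either way $v$ minimises $J|_{\cD\cap X}$ and $v\ne0$ since $c_X<0=J(0)$.

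Finally the two endgames. \emph{(i) $v\in\cS\cap X$ and $\mu>0$:} if $|v|_2<a$, then $v$ is an interior minimiser of $J|_X$, hence (symmetric criticality) $-\Delta v=F'(v)$ on $\rn$, and the Poho\v{z}aev identity forces $J(v)=\tfrac1N|\nabla v|_2^2\ge0$, against $J(v)=c_X<0$; therefore $|v|_2=a$, $v\in\cS\cap X$, and by symmetric criticality $(\mu,v)$ solves \eqref{e-main}, where the Lagrange multiplier satisfies $\tfrac12\mu a^2=\tfrac1N|\nabla v|_2^2-J(v)>0$, i.e.\ $\mu>0$ (that $\mu\ge0$ also follows, as in Theorem~\ref{T-main1}, from the Clarke-type argument since $v$ minimises over the solid set $\cD\cap X$). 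The equalities $J(v)=\inf_{\cD\cap X}J=\inf_{\cS\cap X}J$ are then immediate from $\cS\cap X\subseteq\cD\cap X$, and $\inf_\cD J=\inf_\cS J$ is part of Theorem~\ref{T-main1}. \emph{(ii) strict inequality:} plainly $c_X\ge c$; if $c_X=c$, then $v\in\cD$ with $J(v)=c$, so $v$ minimises $J|_\cD$, whence by the last part of Theorem~\ref{T-main1} (applicable since $F$ is even) $v$ has constant sign; but $v\in X\subseteq H_\tau$ means $v=-v(\tau\cdot)$, so $v\le0$ and $v\ge0$ everywhere, i.e.\ $v\equiv0$, contradicting $J(v)<0$; hence $c_X>c$. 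The only point demanding real work beyond Theorem~\ref{T-main1} is the compactness in the case $2K<N$.
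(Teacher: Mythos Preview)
Your proposal is correct and follows essentially the same approach as the paper: symmetric criticality (via the evenness of $F$) to reduce to $\cD\cap X$, the construction of a test function in $X$ with negative energy using $\eta_0=\infty$ (cf.\ Remark~\ref{R-neg}\,(i)), the Poho\v{z}aev identity to force $\mu>0$ and $v\in\cS$, and the constant-sign property of global minimisers from the second part of Theorem~\ref{T-main1} to obtain the strict inequality. The paper's proof is in fact much terser---it simply says ``the proof is the same as that of Theorem~\ref{T-main1}''---so your explicit distinction between $2K=N$ (compact embedding) and $2K<N$ (concentration--compactness with translations in $\R^{N-2K}$) supplies details the paper leaves implicit.
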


Note that under the assumptions of Proposition \ref{P-main} there exist two distinct solutions to \eqref{e-main}: $(\lambda,u) \in ]0,\infty[ \times \cS$, which is also radial (up to a translation) and a ground state, and $(\mu,v) \in ]0,\infty[ \times (\cS \cap X)$. We remark that L. Jeanjean \& S.-S. Lu \cite{JeanLu} obtained a solution in $X$ for sufficiently large $a$ with $\eta_0<\infty$, but assuming $\eta_\infty=0$.

When $M\ge2$, the following holds.

\begin{Th}\label{T-main2}
	Assume that $M\ge2$, (F0)--(F3) and \eqref{e-etas} hold, and for every $j\in\{1,\dots,M\}$ and every $\alpha,\beta$ in the proper range $F_j,\widetilde{F}_{\alpha,\beta} \in \cC^1(\R) \setminus \{0\}$ are even, nonnegative, nondecreasing on $[0,\infty[$, and $\widetilde{F}_{j,k}(0)=0$.
	\begin{itemize}
		\item [(a)] If \eqref{e-etal} holds, $L\ge1$ is an integer, $F$ is of the form
		\[
		F(u) = \sum_{j=1}^M F_j(u_j) + \sum_{\ell=1}^L \prod_{j=1}^{M} \widetilde{F}_{\ell,j}(u_j),
		\]
		and $N\le4$ or each $F_j'|_{[0,\infty[}$ satisfies (P), then there exists $(\lambda,u) \in ]0,\infty[^M \times \cS$ to \eqref{e-main} such that $0 > J(u) = \inf_{\cD}J = \inf_{\cS}J$ and each component of $u$ is radial, positive, and radially nonincreasing.
		\item [(b)] If $\eta_0=\infty$ and $F$ is of the form
		\[
		F(u) = \sum_{j=1}^M F_j(u_j) + \sum_{i<j} \widetilde{F}_{i,j}(u_i) \widetilde{F}_{j,i}(u_j)
		\]
		with $\widetilde{F}_{i,j}$ positive on $\R \setminus \{0\}$ for every $i\ne j$, then there exists $(\lambda,u) \in [0,\infty[^M \times \cS$ to \eqref{e-main} such that $0 > J(u) = \inf_{\cD}J = \inf_{\cS}J$, $\max_{j=1,\dots,M}\lambda_j>0$, and each component of $u$ is radial, positive, and radially nonincreasing.\\
		If, moreover, $N\le4$ or each $F_j'|_{[0,\infty[}$ satisfies (P), then $\lambda \in ]0,\infty[^M$.
	\end{itemize}
\end{Th}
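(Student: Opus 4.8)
The strategy follows the scheme already sketched in the introduction: work with the relaxed constraint set $\cD$, minimize $J$ over it, show the infimum is negative and attained, then upgrade the minimizer to lie in $\cS$. I would proceed as follows.

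\emph{Step 1 (the minimization is well posed and the infimum is negative).} Using (F0)--(F2) together with the Gagliardo--Nirenberg inequality \eqref{e-GN} and assumption \eqref{e-etas}, I would show that $J$ is bounded from below and coercive on $\cD$ (in the $N=2$ case one also invokes a Moser--Trudinger estimate to control the exponential term in (F0)). Here the key quantitative point is that the $|u|^{2_\#}$-growth encoded by $\eta_\infty$, once fed through \eqref{e-GN} with exponent $p=2_\#$ (so $p\delta_p=2$), produces a term $2\eta_\infty C_{N,2_\#}^{2_\#}|a|^{4/N}|\nabla u|_2^2$, and \eqref{e-etas} makes its coefficient strictly less than $1$, leaving a positive fraction of the Dirichlet energy. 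Negativity of the infimum comes from (F3): rescaling a fixed profile as $u_t(x)=t^{N/2}u(tx)$ keeps the $L^2$ norms fixed and, since $\eta_0>0$, for small $t$ the nonlinear part dominates the (quadratically vanishing, by (F2)) Dirichlet part; in case (a) the hypothesis \eqref{e-etal} is what guarantees this works within the mass constraint, while in case (b) the assumption $\eta_0=\infty$ makes it automatic. So $\inf_\cD J<0$.

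\emph{Step 2 (a minimizer exists).} Take a minimizing sequence $(u^n)\subset\cD$; by Step 1 it is bounded in $\hrn^M$. Because each $F_j$ and each $\widetilde F_{\ell,j}$ is even and nondecreasing on $[0,\infty[$, I would first replace $u^n$ by its Schwarz symmetrization componentwise: this does not increase $J$ (Pólya--Szegő for the gradient term, and the Hardy--Littlewood/layer-cake inequality handles the product terms $\prod_j\widetilde F_{\ell,j}(u_j)$ and $\widetilde F_{i,j}(u_i)\widetilde F_{j,i}(u_j)$ because the rearrangement of a product of nonnegative radial-decreasing factors dominates), and it stays in $\cD$. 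Now the sequence is radial and decreasing, so by the compact embedding of radial $H^1$ into $L^p(\rn)$ for $2<p<2^*$ (and the pointwise decay $|u^n(x)|\lesssim |x|^{-N/2}|u^n|_{H^1}$) I get $u^n\rightharpoonup u$ weakly in $\hrn^M$, strongly in every such $L^p$, and pointwise a.e. The set $\cD$ is weakly closed, so $u\in\cD$; weak lower semicontinuity of the Dirichlet term plus the $L^p$-convergence of the nonlinear part (controlled by (F0), (F1)) give $J(u)\le\liminf J(u^n)=\inf_\cD J$, hence $u$ is a minimizer. Since $\inf_\cD J<0$, $u\ne 0$.

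\emph{Step 3 (Lagrange multipliers and their sign).} The minimizer $u$ of $J|_\cD$ satisfies, by a nonsmooth Lagrange-multiplier rule (Clarke's theorem, as in \cite{MedSc}), $-\Delta u_j+\lambda_j u_j=\partial_j F(u)$ with $\lambda_j\ge0$, and $\lambda_j=0$ unless $|u_j|_2=a_j$; the sign is precisely the gain from minimizing over $\cD$ rather than $\cS$. Here one needs each component $u_j$ to be nontrivial: this follows because, since $\eta_0>0$ (in case (a)) or $=\infty$ (in case (b)) and the coupling terms are nonnegative, if some component vanished one could switch on a small rescaled bump in that slot and strictly decrease $J$, contradicting minimality. (In case (b) the hypothesis that $\widetilde F_{i,j}$ is positive off $0$ is what makes the coupling genuinely present; at this stage one only gets $\max_j\lambda_j>0$, which is recorded.)

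\emph{Step 4 (the constraint is saturated: $|u_j|_2=a_j$ for all $j$).} This is the heart of the matter and, as the introduction warns, the genuinely delicate point when $M\ge2$, because $\cS\subsetneq\partial\cD$. Suppose $|u_{j_0}|_2<a_{j_0}$ for some $j_0$; then $\lambda_{j_0}=0$, so $u_{j_0}$ solves $-\Delta u_{j_0}=\partial_{j_0}F(u)$ on $\rn$. I would derive a contradiction by a Liouville/Pohozaev-type argument combined with a rescaling test: precisely, since $\lambda_{j_0}=0$ and $\partial_{j_0}F\ge0$ (each $F_j'$ and each $\widetilde F_{\ell,j}$ is nonnegative on $[0,\infty[$ and $u_{j_0}\ge0$), $u_{j_0}$ is a nonnegative subsolution of $-\Delta w=0$ with the right decay, forcing structural rigidity; more usefully, one tests the minimality of $u$ against the competitor obtained by rescaling only the $j_0$-th component, $u_{j_0}\mapsto t^{N/2}u_{j_0}(t\,\cdot)$ with $t$ slightly larger than $1$ (still admissible since the mass is unchanged and the others untouched), and computes $\frac{d}{dt}J\big|_{t=1}$. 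Because $\lambda_{j_0}=0$, the first-order term in $t-1$ has a definite sign coming from the nonlinear contributions: here assumption (P) on $F_j'|_{[0,\infty[}$ (when $N\ge5$) is exactly what is needed to control the behaviour of $\int F_j(u_{j_0})$ under this scaling near the origin and conclude the derivative is negative, contradicting minimality. (For $N\le4$ the Sobolev range is wide enough that (P) is not needed.) Iterating over $j_0$ gives $u\in\cS$, and then the multipliers inherit strict positivity: with $|u_j|_2=a_j$ the Lagrange rule gives $\lambda_j\ge 0$, and an integration-by-parts identity $\lambda_j a_j^2=\int \partial_jF(u)u_j-|\nabla u_j|_2^2$ combined with (F3)/(P) and the negativity $J(u)<0$ upgrades each $\lambda_j$ to be strictly positive in case (a) and, under the extra $N\le4$-or-(P) hypothesis, in case (b) as well.

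\emph{Step 5 (qualitative properties).} Radial symmetry and monotonicity of each $u_j$ are already built in from the symmetrization in Step 2 (with the usual care that a Schwarz-symmetric minimizer of a problem whose energy is strictly lowered by symmetrization unless the function is already a translate of a radial decreasing one must itself be radial decreasing — here one uses strict monotonicity of $F_j$ near the relevant levels, or a strong-maximum-principle argument as in Proposition \ref{P-decreasing}); positivity of each component follows from the strong maximum principle applied to $-\Delta u_j+\lambda_j u_j=\partial_jF(u)\ge0$ once we know $u_j\not\equiv0$.

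\emph{Main obstacle.} The crux is Step 4: ruling out $|u_{j_0}|_2<a_{j_0}$. In the scalar case $\cS=\partial\cD$ so this is essentially automatic, but for systems a slack component satisfies an equation with zero mass, which is not directly contradictory; the argument must exploit both the nonnegativity/monotonicity structure of $F$ (so that the coupling cannot help a vanishing component) and the fine near-origin growth controlled by (F3) and, for high dimensions, (P), via a one-component rescaling that respects the mass constraint. Getting the sign of the first variation right there, uniformly in the admissible range, is where the real work lies.
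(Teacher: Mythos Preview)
Your Steps 1--3 and 5 are broadly in line with the paper. The genuine gap is Step 4, and it is fatal as written. If $|u_{j_0}|_2<a_{j_0}$, then the $j_0$-th constraint is inactive, so the minimizer $u$ is an \emph{unconstrained} critical point of $J$ with respect to variations of the $j_0$-th component. In particular, your one--component path $t\mapsto\bigl(u_1,\dots,t^{N/2}u_{j_0}(t\,\cdot),\dots,u_M\bigr)$ stays in $\cD$ and has tangent vector supported in the $j_0$-th slot, whence $\frac{d}{dt}J\big|_{t=1}=0$, not $<0$; no first--variation argument can produce a contradiction here. (Also, $-\Delta u_{j_0}=\partial_{j_0}F(u)\ge0$ makes $u_{j_0}$ \emph{super}harmonic, not a subsolution, and nonnegative superharmonic functions on $\rn$ need not vanish for $N\ge3$.) You have also misread the role of (P): it is not a scaling hypothesis but the input to a Liouville theorem.

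The paper closes the gap quite differently, and separately for (a) and (b). For (a): from $\lambda_{j_0}=0$ one gets $-\Delta u_{j_0}\ge F_{j_0}'(u_{j_0})\ge0$, and then a Liouville--type result (\cite[Lemma~A.2]{Ikoma} when $N\le4$, or Lemma~\ref{L-N5} using (P) when $N\ge5$) forces $u_{j_0}\equiv0$. Because $\widetilde F_{\ell,j_0}(0)=0$, all coupling terms die and $J(u)=\sum_{k\ne j_0}J_k(u_k)\ge\sum_{k\ne j_0}m_k$; on the other hand $\inf_\cD J\le\sum_{k}m_k$ since the coupling is nonnegative, so $m_{j_0}\ge0$, contradicting $m_{j_0}<0$ (which follows from (F3) and \eqref{e-etal} applied to the scalar functional $J_{j_0}$). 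For (b): the paper does \emph{not} rely on radial compactness alone but first proves a genuine concentration--compactness lemma (Lemma~\ref{L-cptmin2}) using the coupled rearrangement $\{u,v\}^*$ of Lemma~\ref{L-Shibata}; once a minimizer $u$ is obtained, if $\bar a:=(|u_j|_2)_j\ne a$ then subadditivity (Remark~\ref{R-subadd}) and monotonicity of $m$ give $m(a-\bar a)\ge0$, which contradicts $\eta_0=\infty$ via Remark~\ref{R-neg}. Your proposal does not separate the two cases and misses both mechanisms; this is precisely the ``delicate point when $M\ge2$'' flagged in the introduction.
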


Observe that the coupling terms of $F$ in Theorem \ref{T-main2} are nonnegative, i.e., the system is cooperative.

It is easy to check that a necessary condition for \eqref{e-etas} and \eqref{e-etal} to hold simultaneously is that $\eta_0>\eta_\infty$. This is what holds in the $L^2$-subcritical case, where $\eta_0=\infty$ and $\eta_\infty=0$. At the same time it rules out the $L^2$-critical case when $F$ is of power type, i.e., $F(u)=|u|^{2_\#}/2_\#$ when $M=1$ and similarly when $M\ge2$. This reflects the fact that the $L^2$-critical regime is indeed a delicate case. Nevertheless, if \eqref{e-etas} and \eqref{e-etal} both hold, then under additional conditions we can find a ground state solution to \eqref{e-main} for uncountably many values of $a$ even when the behaviour of $F$ is $L^2$-critical both at zero and at infinity. Observe also that $\eta_0=\infty$ is a necessary condition for Theorem \ref{T-main2} (a) to hold if $N\ge5$.

The reason why we consider two possible forms of $F$ in Theorem \ref{T-main2} is due to the different proofs we provide, which rely on particular features of such two nonlinearities: roughly speaking, we need the coupling part to vanish whenever $u_j=0$ for some $j\in\{1,\dots,M\}$ in point (a), while on the contrary, we need it \textit{not} to vanish whenever $u_i,u_j\ne0$ for some $i\ne j$ in point (b).

\begin{Rem}
	Theorem \ref{T-main2} (b) still holds with a richer coupling part. For instance, one can add a second term $\sum_{i<j} \widetilde{G}_{i,j}(u_i) \widetilde{G}_{j,i}(u_j)$ or coupling terms as in point (a). We decided not to focus on the most generic possible form in order to simplify notations.
\end{Rem}

Our final result deals with the orbital stability of the ground state solutions to \eqref{e-main}. Define $\cG := \Set{ u \in \cD | J(u) = \inf_{\cD}J }$.
\begin{Prop}\label{P-os}
	In the assumptions of the first part of Theorem \ref{T-main1} or the first part of Theorem \ref{T-main2} (b), suppose additionally the following.
	\begin{itemize}
		\item If $M=1$, then $F$ is even.
		\item There exist $S_1 > 0$ and $p \in ]2,2^*[$ such that $|\nabla F(u) - \nabla F(v)| \le S_1 (1 + |u| + |v|)^{p-2} |u - v|$ for every $u,v \in \R^M$.
		\item There exist $S_2 > 0$ and $\sigma \in ]2,2_\#[$ such that $F(u) \le S_2 (|u|^2 + |u|^\sigma)$ for every $u \in \R^M$.
	\end{itemize}
	Then for every $\varepsilon>0$ there exists $\delta>0$ such that for every $\phi \in \hrn^M$ satisfying
	\[
	\inf_{u\in\cG}\|\phi-u\| \le \delta
	\]
	there holds
	\[
	\sup_{t\ge0}\inf_{u\in\cG}\|\Phi(t,\cdot)-u\| \le \varepsilon,
	\]
	where $\Phi$ is the solution to the Cauchy problem consisting of \eqref{e-time} and the initial datum $\Phi(0,\cdot) = \phi$, and $\|\cdot\|$ is the norm of $\hrn^M$.
\end{Prop}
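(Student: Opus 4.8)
The plan is to adapt the classical argument of Cazenave and Lions \cite{CazeLions} for orbital stability of sets of energy minimizers, which reduces the statement to two ingredients: global well-posedness of \eqref{e-time} in $\hrn^M$ with conservation of the masses and of $J$, and the relative compactness, up to translations, of minimizing sequences for $\inf_{\cD}J$. For the first ingredient I would invoke the standard Cauchy theory (as in \cite{Cazenave:book}): the hypothesis $|\nabla F(u)-\nabla F(v)|\le S_1(1+|u|+|v|)^{p-2}|u-v|$ with $p\in]2,2^*[$ makes the nonlinearity energy-subcritical and locally Lipschitz on $\hrn^M$, whence local well-posedness and the conservation laws, while the bound $F(u)\le S_2(|u|^2+|u|^\sigma)$ with $\sigma\in]2,2_\#[$ gives, via \eqref{e-GN} and $\sigma\delta_\sigma<2$, a uniform-in-time control of $|\nabla\Phi(t,\cdot)|_2$ from the identity $\tfrac12|\nabla\Phi(t,\cdot)|_2^2=J(\phi)+\int_{\rn}F(\Phi(t,\cdot))$; this rules out finite-time blow-up and yields global existence, so that $\Phi(t,\cdot)$ is defined and stays in a fixed bounded subset of $\hrn^M$ for all $t\ge0$.

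For the second ingredient I would prove that $\cG\ne\emptyset$ --- which is the first part of Theorem \ref{T-main1}, resp. of Theorem \ref{T-main2} (b) --- and that every sequence $(v_n)\subset\cD$ with $J(v_n)\to\inf_{\cD}J$ has a subsequence which, after suitable translations, converges strongly in $\hrn^M$ to an element of $\cG$. Such $(v_n)$ is bounded in $\hrn^M$ by (F1) and \eqref{e-etas} (coercivity of $J|_{\cD}$ up to the constraint); applying Lions' concentration--compactness principle to the mass densities $\sum_j|v_{n,j}|^2$, vanishing is excluded because $\inf_{\cD}J<0$ (here (F1)--(F3) and \eqref{e-etal} enter, the last two through the negativity of the infimum) and dichotomy by the strict subadditivity of the map sending $a$ to the corresponding value $\inf_{\cD}J$, so that the compactness alternative holds; this forces $|v_{n,j}(\cdot+y_n)|_2\to a_j$ for each $j$, i.e. the weak limit $v$ belongs to $\cS$, together with strong $L^2$ and $L^p$ ($2<p<2^*$) convergence, after which $J(v_n)\to J(v)$ upgrades this to strong $\hrn^M$ convergence with $v\in\cG$. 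The delicate point --- and the main obstacle of the whole proof --- is precisely the exclusion of loss of $L^2$ mass: globally when $M=1$ (where $\cS=\partial\cD$) and componentwise when $M\ge2$ (where $\cS\subsetneq\partial\cD$, so that a priori some $|v_{n,j}|_2$ could drop strictly below $a_j$ in the limit); for this I would reuse the arguments already developed for Theorem \ref{T-main1}, resp. Theorem \ref{T-main2} (b), which is exactly where the form of $F$, the sign of its coupling terms, and the size conditions \eqref{e-etas}--\eqref{e-etal} are exploited, together with the evenness of $F$ when $M=1$.

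Finally I would close by contradiction in the usual way. If stability failed there would be $\varepsilon_0>0$, data $\phi_n$ with $\inf_{u\in\cG}\|\phi_n-u\|\to0$ and times $t_n\ge0$ with $\inf_{u\in\cG}\|\Phi_n(t_n,\cdot)-u\|\ge\varepsilon_0$, where $\Phi_n$ solves \eqref{e-time} with $\Phi_n(0,\cdot)=\phi_n$; since $t\mapsto\Phi_n(t,\cdot)$ is continuous into $\hrn^M$ and equals $\phi_n$ at $t=0$, after shrinking $\varepsilon_0$ we may arrange $\inf_{u\in\cG}\|\Phi_n(t_n,\cdot)-u\|=\varepsilon_0$ for all large $n$. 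Writing $v_n:=\Phi_n(t_n,\cdot)$ and picking $u_n\in\cG$ with $\|\phi_n-u_n\|\to0$, the conservation laws and the locally uniform continuity of $J$ and of the masses give $|v_{n,j}|_2\to a_j$ and $J(v_n)\to\inf_{\cD}J$; a rescaling of $v_n$ by factors $1+o(1)$ turns it into a genuine minimizing sequence in $\cD$, to which the second ingredient applies: up to translations $y_n$ and a subsequence, $v_n(\cdot+y_n)\to w\in\cG$ strongly in $\hrn^M$. As $\cG$ is invariant under translations (and, on the complex space, under multiplication of each component by a phase), $w(\cdot-y_n)\in\cG$ and $\|v_n-w(\cdot-y_n)\|\to0$, contradicting $\inf_{u\in\cG}\|v_n-u\|=\varepsilon_0>0$.
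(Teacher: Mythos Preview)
Your proposal is correct and follows essentially the same classical Cazenave--Lions route as the paper, which in fact omits the proof and simply cites Corollary~\ref{C-cpt} (relative compactness up to translations of minimizing sequences for $J|_{\cD}$), global well-posedness of \eqref{e-time}, and conservation of mass and energy. One minor misattribution: the paper remarks that the evenness of $F$ for $M=1$ is needed to obtain the conservation laws for the complex-valued flow, not in the compactness step (Lemma~\ref{L-cptmin} and Corollary~\ref{C-cpt} hold without it).
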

The proof is classical (see, e.g., \cite{CazeLions,GouJean}) and based on the relative compactness up to translations of minimizing sequences -- cf. Corollary \ref{C-cpt}, the global well-posedness of the Cauchy problem, and the conservations of mass and energy, therefore we omit it. We remark that the evenness of $F$ when $M=1$ is used to infer the aforementioned conservations (when $M\ge2$, we need $F$ to be even in each component, which is already part of the assumptions). We also point out that, as observed in \cite[page 222]{Shibata_2014}, the other two additional assumptions are used for the global well-posedness (see, e.g., \cite{Cazenave:book}).

We conclude providing some examples for $F$ in Theorems \ref{T-main1} and \ref{T-main2} and Propositions \ref{P-decreasing} and \ref{P-main}, beginning with the case $M=1$. A first model for the nonlinearity is
\[
F(u)=\frac{\nu}{2_\#}|u|^{2_\#}+\frac{\bar\nu}{p}|u|^p
\]
for some $\nu\ge0$, $\bar\nu>0$, and $2<p<2_\#$, in which case one has $\eta_0=\infty$ and $\eta_\infty=\nu/2_\#$. A second model is a sort of counterpart of the first one, i.e.,
\begin{equation}\label{e-1}
	F(u)=\int_0^{|u|}\min\{t^{2_\#},t^p\}\,dt
\end{equation}
with $2<p<2_\#$, in which case one has $\eta_0=1/2_\#$ and $\eta_\infty=0$.

Now define $F^*\colon[0,\infty[\to\R$ by $F^*(0)=0$ and
\[
F^*(t)=
\begin{cases}
	-\frac{t^2}{\ln t} \quad & \text{if } 0<t<\frac12\\
	\frac{b}2\bigl((b+2)t-\frac12-\frac{b}2\bigr) \quad & \text{if } \frac12\le t\le1\\
	-t^2+2ct-1-\frac{b}4(b+1) \quad & \text{if } 1<t\le c\\
	F^*(2c-t) \quad & \text{if } c<t\le2c\\
	0 \quad & \text{if } t>2c
\end{cases}
\]
with $b=1/\ln2$ and $c=b(b+2)/4+1$ and let $F(u):=F^*(|u|)$. Then there holds $\eta_0=\infty$ and $\eta_\infty=0$. Note that (F2) is satisfied, but $\lim_{t\to0}F(t)/|t|^p=\infty$ for every $p>2$. One can also modify the previous example in order to have $\eta_0<\infty$ by defining
\[
F^*(t)=
\begin{cases}
	\frac{t^{2_\#}}{2_\#} \quad & \text{if } 0\le t\le1\\
	t-1+\frac1{2_\#} \quad & \text{if } 1<t<2\\
	-t^2+5t-5+\frac1{2_\#} \quad & \text{if } 2\le t\le\frac52\\
	F^*(5-t) \quad & \text{if } \frac52<t<5\\
	0 \quad & \text{if } t\ge5.
\end{cases}
\]
Notice that, in both examples, $F^*$ is \textit{not} monotone, therefore such examples do not suit the case $M\ge2$; however, if we modify $F^*$ such that it is constant after it reaches its maximum, then we can consider it also for that case.

For the case when both $\eta_\infty$ and $\eta_0$ are finite and positive, an example is
\begin{equation}\label{e-2}
	F(u)=
	\begin{cases}
		\frac{|u|^{2_\#}}{2_\#} \quad & \text{if } 0\le|u|\le1\\
		|u|-1+\frac1{2_\#} \quad & \text{if } 1<|u|<2\\
		\frac{|u|^{2_\#}}{2^{2_\#-1}2_\#}+1-\frac1{2_\#} \quad & \text{if } |u|\ge2.
	\end{cases}
\end{equation}

Finally, sign-changing nonlinearities can be obtained, e.g., by adding the term $-|u|^q/q$ in the previous examples, where $2_\# < q \le 2^*$ if $N\ge3$, $2_\# < q < 2^*$ otherwise.

Concerning the case $M\ge2$, similarly a model for the nonlinearity (let us begin with the assumptions of Theorem \ref{T-main2} (a)) is
\begin{equation}\label{e-exM}
	F(u) = \sum_{j=1}^M \left( \frac{\nu_j}{2_\#} |u_j|^{2_\#} + \frac{\bar\nu_j}{p_j} |u_j|^{p_j} \right) + \alpha\prod_{j=1}^M |u_j|^{r_j} + \beta\prod_{j=1}^M |u_j|^{\bar r_j}
\end{equation}
for some $\nu_j,\alpha,\beta\ge0$, $\bar\nu_j>0$, $r_j,\bar r_j>1$, and $2<p_j<2_\#$ such that $\alpha+\beta>0$, $\sum_{j=1}^Mr_j=2_\#$, and $\sum_{j=1}^M\bar r_j<2_\#$. When $N\ge5$ (which implies $M=2$, cf. Remark \ref{R-K}), we need to add the term
\[
\frac{\tilde\nu_1}{q_1}|u_1|^{q_1}+\frac{\tilde\nu_2}{q_2}|u_2|^{q_2}, \quad 2<q_1,q_2\le\frac{2N-2}{N-2}, \, \tilde\nu_1,\tilde\nu_2>0,
\] (then we can allow $\bar\nu_j=0$). In this case, again one has $\eta_0=\infty$. If $\alpha=0$, then $\eta_\infty=\max_{j=1,\dots,M}\nu_j/2_\#$; if $M=2$ and $\nu_1=\nu_2=0$, then $\eta_\infty=\alpha\sqrt{r_1^{r_1}r_2^{r_2}/2_\#^{2_\#}}$ (see Appendix \ref{App} for more details on such computations).

As for the assumptions of Theorem \ref{T-main2} (b) (let us consider the case $M=3$ for simplicity), likewise we can take as a model
\begin{equation}\label{e-exM2}\begin{split}
		F(u) & = \sum_{j=1}^3 \left( \frac{\nu_j}{2_\#} |u_j|^{2_\#} + \frac{\bar\nu_j}{p_j} |u_j|^{p_j} \right) + \alpha\prod_{j=1}^3 |u_j|^{r_j}\\
		& + \alpha_{12}|u_1|^{r_{12}}|u_2|^{r_{21}} + \alpha_{13}|u_1|^{r_{13}}|u_3|^{r_{31}} + \alpha_{23}|u_2|^{r_{23}}|u_3|^{r_{32}}\\
		& + \beta\prod_{j=1}^3 |u_j|^{\bar{r}_j} + \beta_{12}|u_1|^{\bar{r}_{12}}|u_2|^{\bar{r}_{21}} + \beta_{13}|u_1|^{\bar{r}_{13}}|u_3|^{\bar{r}_{31}} + \beta_{23}|u_2|^{\bar{r}_{23}}|u_3|^{\bar{r}_{32}}
\end{split}\end{equation}
for some $\alpha,\beta \ge 0$, $\alpha_{ij},\beta_{ij} > 0$, $r_{ij},\bar{r}_{ij} > 1$ such that $r_{ij} + r_{ji} = 2_\#$ and $\bar{r}_{ij} + \bar{r}_{ji} < 2_\#$, and $\nu_j,\bar{\nu}_j,p_j,r_j,\bar{r}_j$ as before.

Finally, one can take $F_j$ and $\widetilde{F}_{j,k}$ as in \eqref{e-1} or \eqref{e-2}, possibly with additional restrictions on $F_j$ in a similar way as before if $N\ge5$.

\begin{Rem}\label{R-K}
	Although there are no explicit restrictions on $M$ in Theorem \ref{T-main2}, the example in \eqref{e-exM} shows that we could need $M$ not to be too large. As a matter of fact, since $r_j>1$ for every $j\in\{1,\dots,M\}$, there holds
	\[
	M < \sum_{j=1}^M r_j = 2_\#
	\]
	(or likewise with $\bar{r}_j$ if $\alpha=0$). As for examples of when $M$ can be arbitrary, one can replace each $|\cdot|^{r_j}$ and $|\cdot|^{\bar{r}_j}$ in \eqref{e-exM} with suitable bounded functions (cf. the paragraph before \eqref{e-2}) or take $\alpha=\beta=0$ in \eqref{e-exM2}.
\end{Rem}

\subsection{Notations and structure of the paper}
The interior of a set $A$ will be denoted by $\mathring{A}$ and its characteristic function by $\chi_A$; if $A$ is a Borel set, then $|A|$ stands for its Lebesgue measure. $B_r$ is the open ball of radius $r\ge0$ centred at $0$, while the $L^p(\rn)$ norm of a function $w$ will be denoted by $|w|_p$, $1\le p\le\infty$. Concerning sequences, we will write $e_n$ for $(e_n)_{n\in\mathbb{N}}$ and $e_n\in E$ for $(e_n)_{n\in\mathbb{N}}\subset E$. If $e_n$ represents a sequence of $M$-tuples of numbers or functions, then we will write $e^n$ and use the subscript for the various components of the $M$-tuple. Sometimes we will write explicitly, for $a=(a_1,\dots,a_M)\in]0,\infty[^M$, $\cS(a)$ instead of $\cS$, $\cD(a)$ instead of $\cD$, and $m(a) := \inf\Set{J(u)|u\in\cD(a)}$. $F_+$ stands for the positive part of $F$.

In Section \ref{M1} we study the case $M=1$, in Section \ref{M2} we study the case $M\ge2$, and in Section \ref{gsem} we provide some results on the ground state energy map $m$. Appendix \ref{App} contains explicit computations about the examples for the nonlinearity provided above.

\section{Ground states for $M=1$}\label{M1}

In this and the following sections, we will always assume that (F0) is satisfied and we will make use of it without explicit mention.

\begin{Lem}\label{L-cbb}
	If (F0)--(F2) and \eqref{e-etas} hold, then $J|_{\cD}$ is coercive and bounded from below.
\end{Lem}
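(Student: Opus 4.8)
The plan is to show that for $u \in \cD$, the energy $J(u) = \tfrac12 |\nabla u|_2^2 - \int_{\rn} F(u)\,dx$ is controlled from below by a quantity that blows up with $|\nabla u|_2$. The only obstruction to coercivity and lower-boundedness is the negative term $\int_{\rn} F(u)\,dx$, so the heart of the matter is an upper bound on $\int_{\rn} F_+(u)\,dx$ in terms of $|\nabla u|_2$ and the (bounded) masses $|u_j|_2 \le a_j$. First I would use (F1) together with (F2) to produce, for any $\varepsilon > 0$, a constant $C_\varepsilon$ such that $F(u) \le F_+(u) \le (\eta_\infty + \varepsilon)|u|^{2_\#} + \varepsilon|u|^2 + C_\varepsilon |u|^p$ for a conveniently chosen intermediate exponent $p \in (2, 2_\#)$; more precisely, (F2) kills the behaviour near $0$, (F1) controls the behaviour near $\infty$ up to $\varepsilon$, and on the compact intermediate region one absorbs the rest into a lower-order power term. (When $N \ge 3$, (F0) guarantees $F_+$ has at most Sobolev-critical growth, which is what lets these pointwise bounds be integrated; when $N = 1,2$ the relevant subcritical control is automatic.)

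Next I would integrate this pointwise bound over $\rn$ and apply the Gagliardo--Nirenberg inequality \eqref{e-GN} to each term. For the critical term, $\int_{\rn}|u|^{2_\#}\,dx \le C_{N,2_\#}^{2_\#} |u|_2^{2_\#(1-\delta_{2_\#})} |\nabla u|_2^{2_\#\delta_{2_\#}} = C_{N,2_\#}^{2_\#} |u|_2^{4/N} |\nabla u|_2^2$, using $2_\#\delta_{2_\#} = 2$ and $2_\#(1-\delta_{2_\#}) = 4/N$; here I would bound $|u|_2 \le |a|$ (understanding $|u|^2 = \sum_j u_j^2$, so $|u|_2^2 = \sum_j |u_j|_2^2 \le |a|^2$). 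This yields a bound of the form $\int_{\rn} F_+(u)\,dx \le (\eta_\infty + \varepsilon) C_{N,2_\#}^{2_\#} |a|^{4/N} |\nabla u|_2^2 + (\text{lower-order in } |\nabla u|_2)$, where the lower-order contributions come from the $|u|^2$ term (which is $\le \varepsilon |a|^2$, a constant) and the $|u|^p$ term (which, since $p\delta_p < 2$, contributes $C |\nabla u|_2^{p\delta_p}$ with exponent strictly less than $2$). Consequently $J(u) \ge \bigl(\tfrac12 - (\eta_\infty+\varepsilon)C_{N,2_\#}^{2_\#}|a|^{4/N}\bigr)|\nabla u|_2^2 - C|\nabla u|_2^{p\delta_p} - C'$.

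The decisive point is the sign of the coefficient of $|\nabla u|_2^2$: assumption \eqref{e-etas}, namely $2\eta_\infty C_{N,2_\#}^{2_\#}|a|^{4/N} < 1$, says exactly that $\tfrac12 - \eta_\infty C_{N,2_\#}^{2_\#}|a|^{4/N} > 0$, so I can fix $\varepsilon$ small enough that $\tfrac12 - (\eta_\infty+\varepsilon)C_{N,2_\#}^{2_\#}|a|^{4/N} =: c_0 > 0$. Then $J(u) \ge c_0 |\nabla u|_2^2 - C|\nabla u|_2^{p\delta_p} - C'$ with $p\delta_p < 2$, and since the quadratic term dominates a strictly sub-quadratic term as $|\nabla u|_2 \to \infty$, the right-hand side tends to $+\infty$; in particular it is bounded below over all of $\cD$. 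Because $\|u\|^2 = |\nabla u|_2^2 + |u|_2^2 \le |\nabla u|_2^2 + |a|^2$ on $\cD$, coercivity in $|\nabla u|_2$ is equivalent to coercivity in the full $\hrn^M$-norm on $\cD$, which completes the argument.

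I expect the main obstacle to be organizing the three-region pointwise estimate on $F_+$ cleanly enough that the $\varepsilon$ in front of the critical Gagliardo--Nirenberg term can genuinely be taken arbitrarily small while the leftover mass is swept into a single lower-order power $|u|^p$ with $p < 2_\#$; this requires being a little careful that (F2) is used to control $F$ only in a neighbourhood of the origin and (F1) only near infinity, with (F0)'s growth bound ensuring integrability of whatever remains in between. Everything after that is a routine application of \eqref{e-GN} and the elementary fact that $t \mapsto c_0 t^2 - C t^{p\delta_p} - C'$ is coercive when $c_0 > 0$ and $p\delta_p < 2$.
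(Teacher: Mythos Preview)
Your proposal is correct and follows essentially the same approach as the paper: derive a pointwise upper bound on $F$ from (F1)--(F2), apply the Gagliardo--Nirenberg inequality, and use \eqref{e-etas} to make the coefficient of $|\nabla u|_2^2$ positive. The paper's version is slightly leaner, using the two-term bound $F(u)\le c_\varepsilon|u|^2+(\varepsilon+\eta_\infty)|u|^{2_\#}$ with a \emph{large} constant in front of $|u|^2$ (since $|u|_2^2\le|a|^2$ is already bounded on $\cD$, there is no need to make that coefficient small or to introduce your intermediate $|u|^p$ term).
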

\begin{proof}
	From (F1) and (F2), for every $\varepsilon>0$ there exists $c_\varepsilon>0$ such that $F(u)\le c_\varepsilon u^2+(\varepsilon+\eta_\infty)|u|^{2_\#}$ for every $u\in\R$. In view of \eqref{e-GN}, for every $u\in\cD$ we have
	\[\begin{split}
		J(u)&\ge\frac12|\nabla u|_2^2-c_\varepsilon |u|_2^2-(\varepsilon+\eta_\infty)|u|_{2_\#}^{2_\#}\\
		&\ge\frac12|\nabla u|_2^2-c_\varepsilon a^2-(\varepsilon+\eta_\infty)a^{4/N}C_{N,2_\#}^{2_\#}|\nabla u|_2^2\\
		&=\left(\frac12-(\varepsilon+\eta_\infty) a^{4/N}C_{N,2_\#}^{2_\#}\right)|\nabla u|_2^2-c_\varepsilon a^2,
	\end{split}\]
	hence the statement holds true for sufficiently small $\varepsilon$.
\end{proof}

\begin{Rem}\label{R-cbb}
	Lemma \ref{L-cbb} still holds for $M\ge2$ because $\left||u|\right|_r=\left|u\right|_r$, $1\le r\le\infty$, and $\left|\nabla|u|\right|_2\le\left|\nabla u\right|_2$, hence one can use \eqref{e-GN} with $|u|$.
\end{Rem}

For $u\in\hrn\setminus\{0\}$ and $s>0$ let $s\star u(x):=s^{N/2}u(sx)$. Note that $|u|_2=|s\star u|_2$.

\begin{Lem}\label{L-neg}
	If (F0), (F3), and \eqref{e-etal} hold, then $\inf_{\cD}J<0$.
\end{Lem}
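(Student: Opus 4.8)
The plan is to exhibit, for small $s>0$, a competitor $s\star u\in\cS\subset\cD$ with $J(s\star u)<0$. The mechanism is that under the $L^2$-critical scaling $u\mapsto s\star u$ the kinetic term and --- thanks to (F3) --- a lower bound for the potential term both scale like $s^2$, so that for small $s$ the sign of $J(s\star u)$ is governed by a single fixed quotient which \eqref{e-etal} forces to be favourable. First I would record the scaling identities: for $u\in\hrn\setminus\{0\}$ and $s>0$ one has $|s\star u|_2=|u|_2$, $|\nabla(s\star u)|_2^2=s^2|\nabla u|_2^2$, and $|s\star u|_{2_\#}^{2_\#}=s^2|u|_{2_\#}^{2_\#}$ (the last because $2_\#\delta_{2_\#}=2$, equivalently $N(2_\#/2-1)=2$), while $|s\star u|_\infty=s^{N/2}|u|_\infty\to0$ as $s\to0^+$.

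Next, since $\eta_0>0$, inequality \eqref{e-etal} (which for $M=1$ reads $2\eta_0 C_{N,2_\#}^{2_\#}a^{4/N}>1$) lets me fix $\kappa>0$ with $2\kappa C_{N,2_\#}^{2_\#}a^{4/N}>1$ and, if $\eta_0<\infty$, also $\kappa<\eta_0$ (if $\eta_0=\infty$ any such $\kappa$ is admissible); then the definition of $\eta_0$ yields $\rho>0$ with $F(t)\ge\kappa|t|^{2_\#}$ for all $|t|\le\rho$ (recall $F(0)=0$). Since $C_{N,2_\#}^{2_\#}$ equals the supremum of $|w|_{2_\#}^{2_\#}/(|w|_2^{4/N}|\nabla w|_2^2)$ over $w\in\hrn\setminus\{0\}$ --- this is \eqref{e-GN} at $p=2_\#$ raised to the power $2_\#$ --- and this supremum exceeds $1/(2\kappa a^{4/N})$, the density of $C_c^\infty(\rn)$ in $\hrn$ provides a \emph{bounded} $u\in\hrn$ with $|u|_2=a$ and $|\nabla u|_2^2<2\kappa|u|_{2_\#}^{2_\#}$ (take a near-optimizer of the quotient and normalize its $L^2$ norm by an amplitude rescaling, which does not change the quotient).

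To conclude, for $s>0$ small enough that $|s\star u|_\infty\le\rho$ the pointwise bound $F(s\star u(x))\ge\kappa|s\star u(x)|^{2_\#}$ holds a.e., so the scaling identities give
\[
J(s\star u)\le\frac{s^2}{2}|\nabla u|_2^2-\kappa s^2|u|_{2_\#}^{2_\#}=\frac{s^2}{2}\bigl(|\nabla u|_2^2-2\kappa|u|_{2_\#}^{2_\#}\bigr)<0;
\]
since $|s\star u|_2=a$ we have $s\star u\in\cS\subset\cD$, hence $\inf_\cD J\le J(s\star u)<0$. (For $M\ge2$ the same computation applied to the diagonal $u=(v,\dots,v)$ with $v$ scalar and $|v|_2=\min_{1\le j\le M}a_j$ produces the extra factor $M^{2/N}$ appearing in \eqref{e-etal}.) The one genuinely delicate point is the bookkeeping matching the scaling-invariant quotient against $1/(2\kappa)$ through the sharp Gagliardo--Nirenberg constant and \eqref{e-etal}, together with keeping the profile bounded so that the small-amplitude estimate from (F3) survives after the scaling $s\to0^+$; the rest is routine.
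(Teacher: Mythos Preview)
Your proof is correct and follows essentially the same route as the paper: both arguments use the $L^2$-invariant scaling $s\star u$, reduce via (F3) to finding a bounded profile $u\in\cD$ with $|\nabla u|_2^2<2\kappa|u|_{2_\#}^{2_\#}$, and recognize that \eqref{e-etal} is exactly the condition permitting this through the sharp Gagliardo--Nirenberg constant. The only cosmetic difference is that the paper, in the case $\eta_0<\infty$, takes the \emph{explicit} Weinstein extremizer $w$ and a spatial dilation $w(t\cdot)$ (yielding an explicit interval of admissible $t$), whereas you use a soft near-optimizer obtained by density and an amplitude rescaling; your version is slightly more robust in that it does not appeal to the existence or boundedness of the extremal profile, while the paper's version records the precise threshold interval.
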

\begin{proof}
	Fix $u\in L^\infty(\rn)\cap\cD\setminus\{0\}$ and note that
	\[
	J(s\star u)=s^2\int_{\rn}\frac12|\nabla u|^2-\frac{F(s^{N/2}u)}{(s^{N/2})^{2_\#}}\,dx.
	\]
	From (F3), $F(s^{N/2}u) \ge 0$ for sufficiently small $s>0$, hence we can use Fatou's lemma in what follows. If $\eta_0=\infty$, then $\displaystyle\lim_{s\to0^+}\int_{\rn}\frac{F(s^{N/2}u)}{(s^{N/2})^{2_\#}}\,dx=\infty$. If $\eta_0<\infty$, then
	\[
	\limsup_{s\to0^+}\int_{\rn}\frac12|\nabla u|^2-\frac{F(s^{N/2}u)}{(s^{N/2})^{2_\#}}\,dx\le\int_{\rn}\frac12|\nabla u|^2-\eta_0|u|^{2_\#}\,dx,
	\]
	hence the statement holds true if $u\in\cD$ and
	\begin{equation}\label{e-ineqeta0}
		\frac12\int_{\rn}|\nabla u|^2\,dx<\eta_0\int_{\rn}|u|^{2_\#}\,dx.
	\end{equation}
	Let $w\in H^1(\rn)$ be the unique positive radial solution to $-\Delta v+\frac2Nv=v^{2_\#-1}$ in $\rn$ \cite{Kwong}. Then $w\in L^\infty(\rn)$ from standard regularity theory and, moreover, it is such that $$|w|_2^{4/N}=\frac{2_\#}{2C_{N,2_\#}^{2_\#}}$$ and equality holds in \eqref{e-GN} \cite{Weinstein}. If we define $u(x):=w(tx)$ for some $t>0$, then $u\in\cD$ and \eqref{e-ineqeta0} become, respectively,
	\begin{equation}\label{e-ineqs}
		|w|_2^2\le a^2t^N \quad \text{and} \quad t^2|\nabla w|_2^2<2\eta_0|w|_{2_\#}^{2_\#}.
	\end{equation}
	Now, with the help of the properties of $w$, it is easy to check by direct computations that \eqref{e-ineqs} holds if and only if $\displaystyle t\in\left[\frac{\sqrt{1+2/N}}{a^{2/N}C_{N,2_\#}^{1+2/N}},\sqrt{2_\#\eta_0}\right[$.
\end{proof}

\begin{Rem}\label{R-neg}
	(i) When $\eta_0<\infty$, we find a \textit{radial} function $u\in\cD$ such that $J(u)<0$; this is why, if we are interested in \textit{nonradial} solutions at a negative energy level, we need to assume that $\eta_0=\infty$. Concerning how to build a nonzero function $u \in L^\infty(\rn) \cap \cD \cap X$, simply take $0 \ne \tilde{u} \in L^\infty(\rn) \cap \cD$ radial and define $u(x) := \tilde{u}(x) \chi(|x_1|-|x_2|)$, where $\chi \colon \R \to [-1,1]$ is a smooth odd function such that $\chi(t) = 1$ for every $t \ge 1$, cf. \cite[Remark 4.2]{NonradMed}.
	
	(ii) Lemma \ref{L-neg} still holds for $M\ge2$ because, if we set $W\in H^1(\rn)^M$ as $W=(w/\sqrt{M},\dots,w/\sqrt{M})$, then $|W|_{2_\#}=|w|_{2_\#}$, $|\nabla W|_2=|\nabla w|_2$, and $|W_j|_2 = |w|_2/\sqrt{M}$, therefore we can define $u(x):=W(tx)$ and conclude likewise. In addition, it still holds even when $a\in[0,\infty[^M\setminus\{0\}^M$ because, for every $j=1,\dots,M$, one can take $W_j=0$ if $a_j=0$ and $W_j=w/\sqrt{M_*}$ if $a_j>0$, where $M_*\in\{1,\dots,M\}$ is the number of nonzero components of $a$.
\end{Rem}

\begin{Lem}\label{L-subadd}
	Let $a,b>0$ and assume that (F0)--(F2) are satisfied.
	\begin{itemize}
		\item [(i)] $m(\sqrt{a^2+b^2}) \le m(a) + m(b)$.
		\item [(ii)] If $m(a)$ or $m(b)$ are attained at a nontrivial function, then $m(\sqrt{a^2+b^2}) < m(a) + m(b)$.
	\end{itemize}
\end{Lem}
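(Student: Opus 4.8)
The plan is to exploit the scaling $s \star u(x) = s^{N/2} u(sx)$, which preserves the $L^2$ norm, together with a concentration/translation trick so that the two "pieces" corresponding to radii $a$ and $b$ barely interact. First I would prove (i). Fix $\varepsilon > 0$ and pick $v \in \cD(a)$, $w \in \cD(b)$ with $J(v) \le m(a) + \varepsilon$ and $J(w) \le m(b) + \varepsilon$; by density (and the continuity of $J$, using (F0)) I may assume $v, w \in \cC_c^\infty(\rn)$, or at least compactly supported and bounded. Actually the cleanest route avoids mixing supports directly: I would instead use the scaling $v_s := s \star v$, which keeps $|v_s|_2 = |v|_2 \le a$, and compute
\[
J(v_s) = \frac{s^2}{2} |\nabla v|_2^2 - \int_{\rn} F(s^{N/2} v)\, dx.
\]
Hmm — but that changes the energy, so the scaling alone does not give subadditivity. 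The right idea, following the classical Lions-type argument, is translation to infinity: take $v \in \cD(a)$, $w \in \cD(b)$ near-optimal and compactly supported, and set $u_R := v + w(\cdot - R e_1)$ for $R$ large. Then $\operatorname{supp} v$ and $\operatorname{supp} w(\cdot - Re_1)$ are disjoint for $R \gg 1$, so $|u_R|_2^2 = |v|_2^2 + |w|_2^2 \le a^2 + b^2$, hence $u_R \in \cD(\sqrt{a^2+b^2})$, and by disjointness $J(u_R) = J(v) + J(w) \le m(a) + m(b) + 2\varepsilon$. Letting $\varepsilon \to 0$ gives (i). (One checks $F(0) = 0$, which follows from (F2), so that $F(u_R)$ splits cleanly across the two supports and over the region where both are zero.)

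For (ii), suppose $m(a)$ is attained at a nontrivial $\bar v \in \cD(a)$ (the case of $m(b)$ is symmetric). The key point is that the inequality in (i) becomes strict because we have genuine room to decrease the energy. I would argue as follows: since $\bar v$ is a minimizer over $\cD(a)$ and nontrivial, Lemma 1.4 (that $\inf_{\cD} J < 0$, via (F3) and \eqref{e-etal}) is not directly what I need — rather I need that I can strictly lower the energy by enlarging the mass. Concretely, consider $t \star \bar v$ for $t$ slightly less than $1$ combined with... no: the cleanest is to replace $\bar v$ by $\theta \bar v$ with $\theta$ slightly larger than $1$, so that $|\theta \bar v|_2 = \theta a$; since $\bar v$ is nontrivial and $J(\bar v) = m(a) < 0$ might or might not hold, instead I use that near a nontrivial minimizer the map $\theta \mapsto J(\theta \bar v)$ satisfies $\frac{d}{d\theta}\big|_{\theta=1} J(\theta \bar v) = |\nabla \bar v|_2^2 - \int \nabla F(\bar v)\cdot \bar v\,dx$. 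Better: I would directly build a competitor for $\cD(\sqrt{a^2+b^2})$ strictly below $m(a) + m(b)$ by taking $\bar v$ (the nontrivial minimizer for $a$) and a near-optimal compactly supported $w$ for $b$, translating $w$ far away as before to get $J(\bar v) + J(w) = m(a) + J(w) \le m(a) + m(b) + \varepsilon$, and then \emph{slightly rescaling the } $w$\emph{-piece} (or moving a small amount of mass from the $w$-piece onto the $\bar v$-piece, or vice versa) to gain a definite amount. The mechanism I expect to work: since $\bar v$ is nontrivial, the function $r \mapsto m(r)$ is strictly decreasing at $r = a$ in a suitable sense — indeed $m$ is nonincreasing (enlarging $\cD$), and strict decrease would follow from the scaling computation plus (F3). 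Then $m(\sqrt{a^2+b^2}) \le m(\sqrt{a^2 + b^2})$ trivially, but using $m(\sqrt{a^2+b^2+\delta^2}) < m(a) - c\delta^2 + m(b)$... I would make this precise by taking the competitor $\bar v_\mu := \mu \star \bar v$ for $\mu$ close to $1$: this stays in $\cD(a) \subset \cD(\sqrt{a^2+b^2})$, and one shows $\inf_{\mu} J(\bar v_\mu) < J(\bar v) = m(a)$ unless $\bar v$ solves a Pohozaev-type identity that is incompatible with being a nontrivial minimizer in this $L^2$-(sub)critical regime; combining such a strictly-better $\bar v_\mu$ with a far-translated near-optimal $w$ yields the strict inequality.

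The main obstacle is making the strict inequality in (ii) rigorous and self-contained: one must locate a concrete source of a \emph{quantitative} energy gain. I expect the cleanest implementation is the ``translate-and-merge with mass redistribution'' argument: take the nontrivial minimizer $\bar v$ for $a$ and near-optimal $w$ for $b$ with disjoint supports after translation; because $\bar v$ is nontrivial there is a point where $\bar v \ne 0$, so by (F2)/(F3) the energy density $\frac12|\nabla \bar v|^2 - F(\bar v)$ has been ``used'' and there is slack in the constraint on the $w$-side or a scaling direction along which $J$ strictly decreases while staying in $\cD(\sqrt{a^2+b^2})$. The subcritical/critical structure — encoded in (F1)--(F3) and the inequalities \eqref{e-etas}, \eqref{e-etal} — is exactly what guarantees such a descent direction exists, so I would invoke Lemma \ref{L-neg}'s scaling computation to extract it. I anticipate the proof in the paper does something close to this, possibly phrased via the monotonicity and strict monotonicity of $r \mapsto m(r)$ (equivalently $a \mapsto m(a)$ componentwise).
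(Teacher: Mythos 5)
Your part (i) is correct and is essentially the paper's own argument: pick near-optimal, compactly supported competitors for $m(a)$ and $m(b)$, translate one far away so the supports are disjoint, and add them; the energy and the squared masses split, giving $m(\sqrt{a^2+b^2})\le m(a)+m(b)+2\varepsilon$.

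Part (ii), however, has a genuine gap. The descent mechanism you finally settle on cannot work: the rescaling $\bar v_\mu=\mu\star\bar v$ preserves the $L^2$ norm, so $\bar v_\mu\in\cD(a)$ for every $\mu$, and since $\bar v$ minimizes $J$ over $\cD(a)$ you always have $J(\bar v_\mu)\ge m(a)$ --- there is no ``unless a Poho\v{z}aev-type identity holds'' alternative, because no competitor inside $\cD(a)$ can beat the minimizer. Your fallback (``move a small amount of mass between the pieces'') is never made quantitative, and the strict monotonicity of $r\mapsto m(r)$ you would like to invoke is not available here: in the paper it is Proposition \ref{P-gsem} (iv), proved \emph{after} and by means of the existence results that rely on this very lemma; moreover you lean on (F3), \eqref{e-etal} and negativity of $m$, none of which are hypotheses of the lemma (only (F0)--(F2) are assumed). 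The missing ingredient is the mass-\emph{increasing} dilation $v\mapsto v(\cdot/s^{1/N})$, not the mass-preserving $s\star v$: it maps $\cD(b)$ into $\cD(\sqrt{s}\,b)$ and satisfies
\begin{equation*}
J\bigl(v(\cdot/s^{1/N})\bigr)=s\Bigl(\frac{1}{2s^{2/N}}|\nabla v|_2^2-\int_{\rn}F(v)\,dx\Bigr)\le sJ(v)\qquad(s\ge1),
\end{equation*}
with \emph{strict} inequality when $s>1$ and $\nabla v\ne0$. Applied to near-minimizers this gives $m(\sqrt{s}\,b)\le s\,m(b)$ for $s\ge1$ (the estimates \eqref{e-m_est}), and applied to the nontrivial minimizer $\bar v$ it gives $m(\sqrt{s}\,a)<s\,m(a)$ for $s>1$; writing $\sqrt{a^2+b^2}$ as $\sqrt{s}\,a$ with $s=(a^2+b^2)/a^2$ when $a\ge b$ (and comparing $m(b)$ with $\tfrac{b^2}{a^2}m(a)$ when $a<b$) and combining the two facts yields the strict subadditivity purely by this scaling-plus-algebra argument, with no sign condition on $m$ and no translation step --- this is how the paper proceeds in \eqref{e-ms} and the lines following it.
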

\begin{proof}
	\textit{(i)} Let $\varepsilon>0$. There exists $u \in \cD(a) \cap \cC_c^\infty(\rn)$ and $v \in \cD(b) \cap \cC_c^\infty(\rn)$ such that $J(u) \le m(a) + \varepsilon$ and $J(v) \le m(b) + \varepsilon$. We can assume that the supports of $u$ and $v$ are disjoint, whence $J(u+v) = J(u) + J(v)$ and $|u+v|_2^2 = |u|_2^2 + |v|_2^2 \le a^2 + b^2$. There follows that
	\[
	m(\sqrt{a^2+b^2}) \le J(u+v) \le m(a) + m(b) + 2\varepsilon.
	\]
	\textit{(ii)} We can assume that $m(a)$ is attained, so let $u \in \cD(a) \setminus \{0\}$ such that $J(u) = m(a)$. We want to prove preliminarily that
	\begin{equation}\label{e-m_est}
		\begin{split}
			m(a) \le \frac{a^2}{b^2}m(b) & \quad \text{if } a \ge b,\\
			m(\sqrt{a^2+b^2}) \le \frac{a^2+b^2}{b^2}m(b) & \quad \text{if } a < b.
		\end{split}
	\end{equation}
	Let $\varepsilon>0$ and $v\in\cD(b)$ such that
	\[
	J(v) \le m(b) + \frac{b^2}{a^2+b^2}\varepsilon < m(b) + \frac{b^2}{a^2}\varepsilon.
	\]
	For every $s \ge 1$ there holds
	\begin{equation}\label{e-ms}\begin{split}
			m(\sqrt{s}b) & \le J\bigl(v(\cdot/s^{1/N})\bigr) = s\left(\frac{1}{2s^{2/N}}|\nabla v|_2^2-\int_{\rn}F(v)\,dx\right) \le sJ(v)\\
			& \le s\left(m(b) + \frac{b^2}{a^2+b^2}\varepsilon\right) < s\left(m(b) + \frac{b^2}{a^2}\varepsilon\right).
	\end{split}\end{equation}
	If $a \ge b$, then \eqref{e-ms} yields
	\[
	m(a) = m\left(\frac{a}{b}b\right) < \frac{a^2}{b^2}\left(m(b) + \frac{b^2}{a^2}\varepsilon\right) = \frac{a^2}{b^2}m(b) + \varepsilon,
	\]
	while if $a < b$, then \eqref{e-ms} yields
	\[
	m(\sqrt{a^2+b^2}) = m\left(\sqrt{\frac{a^2+b^2}{b^2}}b\right) \le \frac{a^2+b^2}{b^2}\left(m(b) + \frac{b^2}{a^2+b^2}\varepsilon\right) = \frac{a^2+b^2}{b^2}m(b) + \varepsilon
	\]
	and \eqref{e-m_est} follows from the arbitrariness of $\varepsilon$. Moreover, arguing as in \eqref{e-ms} we observe that $m(\sqrt{s}a) < sm(a)$ for every $s>1$. Using \eqref{e-m_est}, if $a \ge b$, then
	\[
	m(\sqrt{a^2+b^2}) < \frac{a^2+b^2}{a^2}m(a) = m(a) + \frac{b^2}{a^2}m(a) \le m(a) + m(b),
	\]
	while if $a < b$, then
	\[
	m(\sqrt{a^2+b^2}) \le \frac{a^2+b^2}{b^2}m(b) = \frac{a^2}{b^2}m(b) + m(b) < m(a) + m(b).\qedhere
	\]
\end{proof}

\begin{Rem}\label{R-subadd}
	\textit{(i)} The proof of Lemma \ref{L-subadd} (ii) simplifies if both $m(a)$ and $m(b)$ are attained at nontrivial functions because then one has $m(\sqrt{s}b) < sm(b)$ for every $s>1$ as well.
	
	\textit{(ii)} Lemma \ref{L-subadd} (i) still holds for $M\ge2$ and with $a,b\in[0,\infty[^M$, in which case by $\sqrt{a^2+b^2}$ we mean the $M$-tuple $\left(\sqrt{a_1^2+b_1^2},\dots,\sqrt{a_M^2+b_M^2}\right)$.
\end{Rem}

\begin{Lem}\label{L-cptmin}
	If (F0)--(F3), \eqref{e-etas}, and \eqref{e-etal} hold, then every minimizing sequence for $J|_{\cD}$ is relatively compact in $L^p(\rn)$ up to translations, $2<p<2^*$.
\end{Lem}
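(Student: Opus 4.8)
The plan is to carry out the concentration--compactness scheme for a minimizing sequence, the only nontrivial point being to prevent a loss of $L^2$ mass in the limit by means of the strict subadditivity in Lemma \ref{L-subadd}.

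Let $u_n$ be a minimizing sequence for $J|_\cD$, i.e. $u_n\in\cD$ and $J(u_n)\to m(a)$; by Lemma \ref{L-neg}, $m(a)<0$, and by Lemma \ref{L-cbb} the sequence $u_n$ is bounded in $\hrn$. I would first exclude \emph{vanishing}: if $\sup_{y\in\rn}\int_{B_R(y)}u_n^2\,dx\to0$ for every $R>0$, then by Lions' lemma $u_n\to0$ in $L^{2_\#}(\rn)$; combined with the inequality $F(t)\le\varepsilon t^2+(\varepsilon+\eta_\infty)|t|^{2_\#}$ (valid for every $\varepsilon>0$ thanks to (F1)--(F2), exactly as in the proof of Lemma \ref{L-cbb}) and with $|u_n|_2\le a$, this yields $\limsup_n\int_{\rn}F(u_n)\,dx\le\varepsilon a^2$ for every $\varepsilon>0$, hence $\liminf_n J(u_n)\ge0$, contradicting $m(a)<0$. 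Therefore there exist $R,\delta>0$ and $y_n\in\rn$ such that $\int_{B_R(y_n)}u_n^2\,dx\ge\delta$ for all $n$ along a subsequence. The functions $v_n:=u_n(\cdot+y_n)$ still form a minimizing sequence in $\cD$ and are bounded in $\hrn$; up to a further subsequence $v_n\rightharpoonup v$ in $\hrn$, $v_n\to v$ a.e. and in $L^q_{\mathrm{loc}}(\rn)$ for $1\le q<2^*$, and $v\ne0$ because $\int_{B_R}v^2\,dx=\lim_n\int_{B_R}v_n^2\,dx\ge\delta$. Set $b:=|v|_2\in\,]0,a]$ and $w_n:=v_n-v\rightharpoonup0$.

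The core step is a Brezis--Lieb splitting. From $w_n\rightharpoonup0$ in $\hrn$ one gets $|\nabla v_n|_2^2=|\nabla v|_2^2+|\nabla w_n|_2^2+o(1)$ and $|v_n|_2^2=|v|_2^2+|w_n|_2^2+o(1)$, while $\int_{\rn}F(v_n)\,dx=\int_{\rn}F(v)\,dx+\int_{\rn}F(w_n)\,dx+o(1)$; the last identity follows from (F0) and $v_n\to v$ a.e. by writing $F(v_n)-F(w_n)=\int_0^1\nabla F(w_n+tv)\cdot v\,dt$ and invoking Vitali's theorem, the only delicate term $|v_n|^{2^*-1}|v|$ being uniformly integrable since its integral over a measurable set $A$ is controlled by $|v|_{L^{2^*}(A)}$. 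Hence $J(v_n)=J(v)+J(w_n)+o(1)$. Passing to a further subsequence, put $\beta:=\lim_n|w_n|_2\ge0$ and $\alpha:=\lim_n|v_n|_2=\sqrt{b^2+\beta^2}\le a$ (since $v_n\in\cD$). As $v\in\cS(b)\subset\cD(b)$ and $w_n\in\cD(|w_n|_2)$, we have $J(v)\ge m(b)$ and $J(w_n)\ge m(|w_n|_2)$; letting $n\to\infty$ and using the continuity of $m$ (with $m(0)=0$; this is elementary from the definition, the monotonicity of $m$, the scaling $u\mapsto u(\cdot/s^{1/N})$, and multiplication by constants in $]0,1]$) we get
\[
m(a)=J(v)+\lim_n J(w_n)\ge m(b)+m(\beta),
\]
whereas, by the monotonicity of $m$ and Lemma \ref{L-subadd}~(i), $m(a)\le m(\alpha)=m\bigl(\sqrt{b^2+\beta^2}\bigr)\le m(b)+m(\beta)$. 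So equality holds throughout; in particular $J(v)=m(b)$, i.e. $m(b)$ is attained at the nontrivial function $v$. If $\beta>0$, Lemma \ref{L-subadd}~(ii) (applied with the radii $b$ and $\beta$) would then force $m(\alpha)=m\bigl(\sqrt{b^2+\beta^2}\bigr)<m(b)+m(\beta)$, a contradiction; hence $\beta=0$, that is $|w_n|_2\to0$, and since $v_n=v+w_n$ we conclude $v_n\to v$ in $L^2(\rn)$.

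To finish, $v_n$ being bounded in $\hrn\hookrightarrow L^{2^*}(\rn)$ (or in $L^r(\rn)$ for all $r<\infty$ if $N\le2$), interpolation between $L^2(\rn)$ and $L^{2^*}(\rn)$ gives $v_n\to v$ in $L^p(\rn)$ for every $p\in\,]2,2^*[$, which is the assertion. The genuine obstacle is the conclusion $|w_n|_2\to0$: because $\hrn\hookrightarrow L^2(\rn)$ is not compact, the weak limit $v$ could a priori carry strictly less mass than prescribed, and it is precisely the strict subadditivity in Lemma \ref{L-subadd}~(ii) --- together with the fact that the weak limit is itself a minimizer at its own mass level, hence nontrivial --- that rules this out; everything else (the exclusion of vanishing, the Brezis--Lieb identity, the continuity of $m$) is routine, and the argument applies verbatim to any subsequence of $u_n$.
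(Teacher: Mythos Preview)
Your argument is correct and follows a somewhat more streamlined path than the paper's. Both proofs begin the same way---exclude vanishing via Lions' lemma, translate, extract a nontrivial weak limit $v$, and split the energy \`a la Brezis--Lieb---and both ultimately rely on the strict subadditivity of Lemma~\ref{L-subadd}~(ii) to forbid loss of mass. The difference is in how the remainder $w_n$ is treated. You pass directly from $J(w_n)\ge m(|w_n|_2)$ to $\lim_n J(w_n)\ge m(\beta)$ by invoking continuity of $m$ on $[0,\infty[$, and a single application of strict subadditivity then forces $\beta=0$ (hence $L^2$ convergence, and $L^p$ by interpolation). The paper instead never appeals to continuity of $m$: it assumes the remainder does \emph{not} vanish, extracts a \emph{second} nontrivial profile $v$, and, to compare the energy of the new remainder with $m(d)$, normalizes it into $\cS(d)$ and cites \cite[Lemma~2.4]{Shibata_2014} to show this normalization is asymptotically free; having two attained levels $m(b)$, $m(c)$ then makes the strict-subadditivity contradiction immediate. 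Your route is shorter and gives the stronger $L^2$ conclusion, at the price of proving that $m$ is continuous---a fact the paper never states. The ingredients you list (monotonicity together with multiplication by constants in $]0,1]$) do give this, but be aware that right-continuity at $\beta$ requires $H^1$-boundedness of near-minimizers in $\cD(\beta+\varepsilon)$, which comes from Lemma~\ref{L-cbb} applied at radii slightly larger than $a$; this is legitimate because \eqref{e-etas} is a strict inequality, but it is not entirely ``elementary''.
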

\begin{proof}
	Let $u_n\in\cD$ such that $\lim_nJ(u_n) = \inf_{\cD}J$; it is bounded due to Lemma \ref{L-cbb}. If
	\[
	\lim_n \max_{y\in\rn} \int_{B(y,1)} u_n^2 \, dx = 0,
	\]
	then in view of \cite[Lemma I.1]{Lions84_2} we obtain that $u_n \to 0$ in $L^{2_\#}(\rn)$, whence $\int_{\rn}F_+(u_n)\,dx \to 0$ from (F1) and (F2). Moreover, up to a subsequence, $u_n\rightharpoonup0$ in $\hrn$ and $u_n\to0$ a.e. in $\rn$, hence $\liminf_nJ(u_n) \ge 0$, a contradiction with Lemma \ref{L-neg}. Then there exist $u\in\cD\setminus\{0\}$ and $y_n\in\rn$ such that, up to a subsequence, $v_n := u_n(\cdot-y_n) - u \rightharpoonup 0$ in $H^1(\rn)$ and $v_n \to 0$ a.e. in $\rn$. From \cite[Theorem 1]{BrezisLieb} we get
	\[
	\lim_nJ(u_n) - J(v_n) = J(u).
	\]
	If
	\[
	\lim_n \max_{y\in\rn} \int_{B(y,1)} v_n^2 \, dx = 0,
	\]
	then again from \cite[Lemma I.1]{Lions84_2} the statement holds true, thus we assume by contradiction that it is not the case. Consequently, as before, there exist $v\in H^1(\rn)\setminus\{0\}$ and $z_n\in\rn$ such that, up to a subsequence, $w_n := v_n(\cdot-z_n) - v \rightharpoonup 0$ in $H^1(\rn)$, $w_n \to 0$ a.e. in $\rn$,
	\[
	\lim_nJ(v_n) - J(w_n) = J(v), \quad \text{and} \quad \lim_n |u_n|_2^2 - |w_n|_2^2 = |u|_2^2 + |v|_2^2
	\]
	(the last equality is again due to \cite[Theorem 1]{BrezisLieb}). If we denote $b := |u|_2 > 0$ and $c := |v|_2 > 0$, then
	\[
	a^2 - b^2 - c^2 \ge \liminf_n |u_n|_2^2 - |u|_2^2 - |v|_2^2 = \liminf_n |w_n|_2^2 =: d^2 \ge 0.
	\]
	If $d>0$, then let $\tilde{w}_n := \frac{d}{|w_n|_2}w_n \in \cS(d)$. From \cite[Lemma 2.4]{Shibata_2014} we have $\lim_n J(w_n) - J(\tilde{w}_n) = 0$, while from Lemma \ref{L-subadd} (i) and Proposition \ref{P-gsem} (i) we obtain
	\[\begin{split}
		m(a) & = \lim_nJ(u_n) = J(u) + J(v) + \lim_nJ(w_n) = J(u) + J(v) + \lim_nJ(\tilde{w}_n)\\
		& \ge m(b) + m(c) + m(d) \ge m(\sqrt{b^2+c^2+d^2}) \ge m(a),
	\end{split}\]
	hence all the inequalities above are in fact equalities and, in particular, $m(b)$ and $m(c)$ are achieved (at $u$ and $v$ respectively), which due to Lemma \ref{L-subadd} (ii) yields
	\[
	m(a) \ge m(b) + m(c) + m(d) > m(\sqrt{b^2+c^2+d^2}) \ge m(a),
	\]
	a contradiction. If $d=0$, then $w_n \to 0$ in $L^2(\rn)$ and so $\int_{\rn}F_+(w_n)\,dx \to 0$ from (F1), (F2), and \eqref{e-GN}, which implies $\liminf_nJ(w_n) \ge 0$. Then there holds
	\[
	m(a) = \lim_nJ(u_n) = J(u) + J(v) + \lim_nJ(w_n) \ge m(b) + m(c) \ge m(\sqrt{b^2+c^2}) \ge m(a)
	\]
	and we reach a contradiction as before.
\end{proof}

\begin{Lem}\label{L-min}
	If (F0)--(F3), \eqref{e-etas}, and \eqref{e-etal} hold, then $\inf_{\cD}J$ is achieved.
\end{Lem}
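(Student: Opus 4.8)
The plan is to combine the compactness result from Lemma \ref{L-cptmin} with the Brezis--Lieb-type decomposition to upgrade a minimizing sequence to a minimizer, taking care that the weak limit does not lose mass (which is exactly where working with $\cD$ rather than $\cS$ pays off).

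\medskip

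\textbf{Step 1: Extract a profile.} Let $u_n\in\cD$ be a minimizing sequence for $J|_{\cD}$; by Lemma \ref{L-cbb} it is bounded in $\hrn$. By Lemma \ref{L-cptmin} it is relatively compact in $L^p(\rn)$ up to translations for $2<p<2^*$; after translating we may assume $u_n\to u$ in $L^p(\rn)$ for all such $p$, $u_n\rightharpoonup u$ in $\hrn$, and $u_n\to u$ a.e. Since the $L^p$-convergence controls $\int_{\rn}F(u_n)\,dx$ via (F0)--(F2), and the norm $|\nabla\cdot|_2^2$ is weakly lower semicontinuous, we get $J(u)\le\liminf_n J(u_n)=\inf_{\cD}J$.

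\medskip

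\textbf{Step 2: Membership in $\cD$.} This is the easy direction: each $u_n\in\cD$ means $|u_{n,j}|_2\le a_j$, and $|u_j|_2\le\liminf_n|u_{n,j}|_2\le a_j$ by weak lower semicontinuity of the $L^2$-norm, so $u\in\cD$. Hence $J(u)\ge\inf_{\cD}J$, and combined with Step 1, $J(u)=\inf_{\cD}J$, i.e.\ $\inf_{\cD}J$ is achieved at $u$. (That $u$ is nontrivial follows because $\inf_{\cD}J<0$ by Lemma \ref{L-neg} while $J(0)=0$.)

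\medskip

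\textbf{The main obstacle.} The genuine subtlety is already entirely absorbed into Lemma \ref{L-cptmin}: without the strong $L^p$-convergence of the minimizing sequence (up to translation), one would only have $u_n\rightharpoonup u$ and could not pass to the limit in the nonlinear term $\int_{\rn}F(u_n)\,dx$, nor rule out vanishing or dichotomy of mass. So the proof of this lemma is really just the statement ``a relatively compact (up to translation) minimizing sequence has a minimizer as limit point''. I would therefore expect the argument to be short: invoke Lemma \ref{L-cptmin} to translate and pass to a subsequence, use (F0)--(F2) to show $\int_{\rn}F(u_n)\,dx\to\int_{\rn}F(u)\,dx$ (splitting $F$ into the quadratic-at-zero part and the $2_\#$-controlled-at-infinity part, both handled by the $L^p$ convergence), then apply weak lower semicontinuity of the Dirichlet energy and weak lower semicontinuity of the $L^2$-norms to conclude $J(u)=\inf_{\cD}J$ and $u\in\cD$. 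The only point requiring mild care is making the convergence $\int_{\rn}F(u_n)\to\int_{\rn}F(u)$ rigorous on all of $\rn$ (not just on bounded sets), which follows from the uniform bound $F(w)\le c_\varepsilon|w|^2+(\varepsilon+\eta_\infty)|w|^{2_\#}$ used in Lemma \ref{L-cbb} together with the equi-integrability implied by $L^{2}$- and $L^{2_\#}$-convergence along the (translated) subsequence.
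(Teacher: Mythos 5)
Your overall skeleton is the same as the paper's: take a minimizing sequence, use Lemma \ref{L-cbb} for boundedness and Lemma \ref{L-cptmin} for relative compactness up to translations, then conclude via weak lower semicontinuity of the Dirichlet term and of the $L^2$ norm (the latter giving $u\in\cD$). The gap lies in how you pass to the limit in the nonlinear term. You invoke ``equi-integrability implied by $L^2$- and $L^{2_\#}$-convergence'', but Lemma \ref{L-cptmin} gives strong convergence in $L^p(\rn)$ only for $2<p<2^*$; strong $L^2$ convergence is precisely what is \emph{not} available in this problem (its failure is the very reason for working on $\cD$ instead of $\cS$). As a consequence, the bound you quote from Lemma \ref{L-cbb}, $F(w)\le c_\varepsilon|w|^2+(\varepsilon+\eta_\infty)|w|^{2_\#}$, is the wrong splitting for this purpose: the quadratic term carries the large constant $c_\varepsilon$, and without $L^2$ convergence you cannot control $\int_{\rn} c_\varepsilon u_n^2\,dx$. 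The usable bound is the reversed one, $F_+(w)\le\varepsilon w^2+c_\varepsilon|w|^{2_\#}$ (valid by (F1)--(F2)), whose quadratic contribution is uniformly small, $\le\varepsilon a^2$, because $u_n\in\cD$, while the $2_\#$-part passes to the limit thanks to the strong $L^{2_\#}$ convergence.

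Moreover, the claimed two-sided convergence $\int_{\rn}F(u_n)\,dx\to\int_{\rn}F(u)\,dx$ for the full $F$ is too strong: (F0)--(F3) allow a negative part of $F$ growing like $|u|^{2^*}$ (cf. the sign-changing examples in the introduction with $q=2^*$), and no strong $L^{2^*}$ convergence is available, so this convergence need not hold. What is needed is only the one-sided estimate $\limsup_n\int_{\rn}F(u_n)\,dx\le\int_{\rn}F(u)\,dx$, and this is how the paper argues: it proves $\int_{\rn}F_+(u_n)\,dx\to\int_{\rn}F_+(u)\,dx$ (using (F1), (F2) and the Brezis--Lieb lemma), while the negative part is handled by Fatou's lemma through the a.e. convergence; combined with weak lower semicontinuity of $|\nabla\cdot|_2^2$ this yields $J(u)\le\liminf_nJ(u_n)$. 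With these corrections (work with $F_+$ and Fatou, drop any appeal to $L^2$ convergence) your argument becomes the paper's proof.
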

\begin{proof}
	Let $u_n\in\cD$ such that $\lim_nJ(u_n)=\inf_{\cD}J$. In view of Lemmas \ref{L-cbb} and \ref{L-cptmin} there exists $u\in\cD$ such that, up to a subsequence and translations, $u_n\rightharpoonup u$ in $\hrn$ and $u_n\to u$ in $L^p(\rn)$, $2<p<2^*$, and a.e. in $\rn$ as $n\to\infty$. In particular, $\lim_n \int_{\rn} F_+(u_n) \, dx = \int_{\rn} F(u)_+ \, dx$ due to (F1), (F2), and \cite[Theorem 1]{BrezisLieb}, whence
	\[
	\inf_{\cD}J=\lim_nJ(u_n)\ge J(u)\ge \inf_{\cD}J.\qedhere
	\]
\end{proof}

\begin{proof}[Proof of Theorem \ref{T-main1}]
	In view of Lemmas \ref{L-neg} and \ref{L-min} there exists $u\in\cD$ such that $J(u)=\inf_{\cD}J<0$. Then from \cite[Proposition A.1]{MedSc} there exists $\lambda\ge0$ such that
	\begin{equation*}
		-\Delta u+\lambda u=F'(u).
	\end{equation*}
	Note that $\lambda=0$ if $u\in\mathring{\cD}$. Assume by contradiction that $\lambda=0$. Since $M=1$, it is standard that $u\in W_\textup{loc}^{2,p}(\rn)$ for every $p<\infty$, thus $u$ satisfies the Poho\v{z}aev identity \cite{BerLionsI,Jeanjean97}
	\begin{equation}\label{e-Poho}
		(N-2)\int_{\rn}|\nabla u|^2\,dx=2N\int_{\rn}F(u)\,dx
	\end{equation}
	and so $0>J(u)=|\nabla u|_2^2/N$, which is impossible, hence $(\lambda,u)$ solves \eqref{e-main}. Concerning the final part, we argue as in \cite[Proof of Theorem 1.4]{JeanLu2} if $N\ge2$ or $F'$ is locally Lipschitz continuous, otherwise the result follows from the properties of the Schwarz rearrangements \cite{LiebLoss}.
\end{proof}

\begin{proof}[Proof of Proposition \ref{P-decreasing}]
	Since $u$ is radial, we can argue as in \cite[Proof of Lemma 1]{BerLionsI} and have that $u \in \cC^2(\rn)$, hence the first part follows from the strong maximum principle \cite[Theorem VI.IV.4]{Evans}. As for the second part, assume by contradiction that $u$ is constant in the annulus $A:=\{r_1<|x|<r_2\}$ for some $r_2>r_1>0$. Let us consider the case $u>0$ (the one $u<0$ is analogous). Then $0=-\Delta u=F'(u)-\lambda u$ in $A$ and so $-\Delta u\le0$ in $\Omega:=\{|x|>r_1\}$ because $u$ is radially nonincreasing. At the same time $u$ attains its maximum over $\overline\Omega$ at every point of $A$. Suppose first that $N\ge2$, so that $\overline{\Omega}$ is connected. Then, again from the strong maximum principle, $u|_\Omega$ is constant, which is a contradiction. If $N=1$, it suffices to argue replacing $\overline{\Omega}$ with $\{x\ge r_1\}$, which is connected.
\end{proof}

\begin{proof}[Proof of Proposition \ref{P-main}]
	Since $F$ is even, every critical point of $J|_{\cD\cap X}$ is in fact a critical point of $J|_{\cD}$ in virtue of the principle of symmetric criticality \cite{Palais}. Then the proof is the same as that of Theorem \ref{T-main1}. Concerning the strict inequality $\inf_{\cD \cap X}J > \inf_{\cD}J$, it follows from the second part of Theorem \ref{T-main1}.
\end{proof}

\section{Ground states for $M\ge2$}\label{M2}

Let us recall (cf. \cite{LiebLoss}) that for a Borel set $A\subset\rn$ and for a Borel function $u\colon\rn\to\R$ that vanishes at infinity (i.e., $|\{|u|>t\}|<\infty$ for every $t>0$) we define by $A^*$ the Schwarz rearrangement of $A$ and by $u^*$ the Schwarz rearrangement of $u$. If, instead, $u\colon\rn\to\R^M$, then we set $u^* := (u_1^*,\dots,u_M^*)$. The following abstract lemma allows the coupling term of \eqref{e-main} to be rather generic.

\begin{Lem}\label{L-Schwarz}
	Let $\overline M\ge2$ be an integer. For every every $j \in \{ 1, \dots, \overline M \}$ let $\widetilde{F}_j\colon[0,\infty[\to[0,\infty[$ nondecreasing and define $\widetilde{F}\colon\R^{\overline{M}}\to\R$ as
	\[
	\widetilde{F}(u)=\prod_{j=1}^{\overline{M}}\widetilde{F}_j(|u_j|).
	\]
	For every $j \in \{1,\dots,\overline{M}\}$ assume that $\widetilde{F}_j$ is increasing or that it is continuous with $\widetilde{F}_j(0)=0$. Then for every Borel function $u\colon\rn\to\R^{\overline{M}}$ that vanishes at infinity there holds
	\[
	\int_{\rn}\widetilde{F}(u)\,dx\le\int_{\rn}\widetilde{F}(u^*)\,dx.
	\]
\end{Lem}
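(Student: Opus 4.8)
The plan is to reduce the statement to a known rearrangement inequality by approximating the product structure with a superposition of "box" (indicator-type) functions, for which the claim is classical, and then passing to the limit via monotone convergence.

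First I would observe that for a single nonnegative nondecreasing function $g\colon[0,\infty[\to[0,\infty[$ that is either increasing or continuous with $g(0)=0$, one can write, for every $t\ge0$,
\[
g(t)=g(0^+)+\int_0^\infty\chi_{\{g>s\}}(t)\,ds,
\]
where $g(0^+)=\lim_{\tau\to0^+}g(\tau)$ and each set $\{t\ge0:g(t)>s\}$ is an interval of the form $]\gamma_s,\infty[$ or $[\gamma_s,\infty[$ by monotonicity; the two assumptions (increasing, or continuous with $g(0)=0$) are exactly what is needed so that no mass is lost at the origin and the representation is clean — in the continuous case $g(0^+)=g(0)=0$. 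Applying this to each $\widetilde F_j$ and expanding the product
\[
\widetilde F(u)=\prod_{j=1}^{\overline M}\Bigl(\widetilde F_j(0^+)+\int_0^\infty\chi_{\{|u_j|>s_j\}}\,ds_j\Bigr)
\]
by Fubini yields a sum of $2^{\overline M}$ terms, each of which is a constant times $\prod_{j\in I}\chi_{\{|u_j|>s_j\}}$ for a subset $I\subseteq\{1,\dots,\overline M\}$, integrated over the relevant $s_j$'s. Since $|u|$ vanishes at infinity, each superlevel set $\{|u_j|>s_j\}$ has finite measure for $s_j>0$, so all these characteristic functions are indicators of finite-measure sets.

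The key step is then: for finite-measure Borel sets $A_1,\dots,A_k\subseteq\rn$,
\[
\int_{\rn}\prod_{j\in I}\chi_{A_j}\,dx=\Bigl|\bigcap_{j\in I}A_j\Bigr|\le\Bigl|\bigcap_{j\in I}A_j^*\Bigr|=\int_{\rn}\prod_{j\in I}\chi_{A_j^*}\,dx,
\]
which holds because $A_j^*$ is a ball centred at the origin, so the intersection of the $A_j^*$ is the smallest of these balls, whence $\bigl|\bigcap_j A_j^*\bigr|=\min_j|A_j^*|=\min_j|A_j|\ge\bigl|\bigcap_j A_j\bigr|$. Crucially, for $A_j=\{|u_j|>s_j\}$ one has $A_j^*=\{(|u_j|)^*>s_j\}=\{u_j^*>s_j\}$ (up to a null set), by the definition of the Schwarz rearrangement and the fact that $u_j^*=(|u_j|)^*$; this identifies the right-hand side above with the corresponding term in the expansion of $\int_{\rn}\widetilde F(u^*)\,dx$. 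Summing these termwise inequalities over all $I$ and integrating back over the $s_j$'s (using Tonelli, everything being nonnegative) gives $\int_{\rn}\widetilde F(u)\,dx\le\int_{\rn}\widetilde F(u^*)\,dx$. If either side is infinite the inequality is either trivial or follows by truncating $\widetilde F_j$ to $\min\{\widetilde F_j,n\}$ and letting $n\to\infty$ by monotone convergence, noting truncation commutes appropriately with the layer-cake representation.

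The main obstacle, and the only genuinely delicate point, is the behaviour at the origin: if some $\widetilde F_j$ had a jump at $0$ with $\widetilde F_j(0)>0$ while another vanished, the product would not be captured correctly by the layer-cake formula, and more importantly the value of $\widetilde F(u)$ on the (generally infinite-measure) set $\{u_j=0\}$ could differ from that of $\widetilde F(u^*)$ there; this is precisely why the hypothesis forces each $\widetilde F_j$ to be \emph{either} strictly increasing (so $\widetilde F_j(0^+)$ is a genuine value attained in the limit and the constant term $\prod_j\widetilde F_j(0^+)$ appears symmetrically on both sides as a constant function — which contributes $+\infty$ to both integrals unless it vanishes, in which case it is harmless) \emph{or} continuous with $\widetilde F_j(0)=0$ (killing that constant term outright). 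I would handle this by treating separately the two cases in an initial reduction: if every $\widetilde F_j(0^+)=0$ the constant term vanishes and the argument above runs cleanly; otherwise $\prod_j\widetilde F_j(0^+)>0$ forces (by the strict-monotonicity alternative for the offending indices) that $\int\widetilde F(u)\,dx=\int\widetilde F(u^*)\,dx=+\infty$ whenever $u\not\equiv 0$ has a component with full support, and one checks directly that in the remaining degenerate configurations the inequality still holds — so there is no loss of generality in assuming $\widetilde F_j(0^+)=0$ for all $j$, after which the proof is the layer-cake computation sketched above.
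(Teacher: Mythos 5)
Your overall strategy (a layer-cake decomposition followed by the nested-balls inequality $\bigl|\bigcap_j A_j\bigr|\le\bigl|\bigcap_j A_j^*\bigr|$) is the same as the paper's, and that second ingredient is handled correctly; but the decomposition you actually write down does not represent $\widetilde F$, and this breaks the proof at its core. First, the identity $g(t)=g(0^+)+\int_0^\infty\chi_{\{g>s\}}(t)\,ds$ is false whenever $g(0^+)>0$: the integral alone already equals $g(t)$, so the constant is double counted. More seriously, in the product expansion you switch to the sets $\{|u_j|>s_j\}$ integrated in $ds_j$ (Lebesgue measure), and $\int_0^\infty\chi_{\{|u_j|>s_j\}}(x)\,ds_j=|u_j(x)|$, not $\widetilde F_j(|u_j(x)|)-\widetilde F_j(0^+)$; so what you are expanding is $\prod_j\bigl(\widetilde F_j(0^+)+|u_j|\bigr)$, which has nothing to do with $\widetilde F(u)$ unless each $\widetilde F_j$ is essentially the identity. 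As a consequence, the ``crucial'' identification $\{|u_j|>s_j\}^*=\{u_j^*>s_j\}$ you invoke is just the definition of the Schwarz rearrangement and does none of the work.

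The missing step is precisely where the hypotheses on $\widetilde F_j$ are used. To run the layer-cake argument for $\widetilde F_j(|u_j|)$ one must take superlevel sets of the composed function, $\{\widetilde F_j(|u_j|)>t_j\}$, and then prove the nontrivial identity $\{\widetilde F_j(u_j^*)>t_j\}=\{\widetilde F_j(|u_j|)>t_j\}^*$, i.e., that these sets correspond under rearrangement; this is what the paper does via the (generalized) inverse $\widetilde F_j^{-1}(t)=\inf\{s>0:\widetilde F_j(s)>t\}$, and it is exactly here that ``increasing'' or ``continuous with $\widetilde F_j(0)=0$'' enters. Your discussion of the hypotheses concerns only the constant term at the origin, which is not where the difficulty lies. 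You could instead try to salvage your formula by integrating $\chi_{\{|u_j|\ge s_j\}}$ against the Lebesgue--Stieltjes measure $d\widetilde F_j(s_j)$ rather than $ds_j$, but then you must still recover $\widetilde F_j(|u_j|)$ pointwise (jumps of $\widetilde F_j$ may occur at values attained by $|u_j|$ on sets of positive measure) and compare $\{|u_j|\ge s\}^*$ with $\{u_j^*\ge s\}$; none of this is addressed. As written, the argument proves the inequality only for $\widetilde F_j(t)=t$, so there is a genuine gap.
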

\begin{proof}
	Assume first that each $\widetilde{F}_j$ is increasing. From \cite[Theorem 1.13]{LiebLoss} we have
	\[\begin{split}
		\int_{\rn}\prod_{j=1}^{\overline{M}}\widetilde{F}_j(|u_j|)\,dx & =\int_{\rn}\prod_{j=1}^{\overline{M}}\int_0^\infty\chi_{\{\widetilde{F}_j(|u_j|)>t_j\}}(x)\,dt_j\,dx\\
		& =\int_0^\infty\cdots\int_0^\infty\int_{\rn}\prod_{j=1}^{\overline{M}}\chi_{\{\widetilde{F}_j(|u_j|)>t_j\}}(x)\,dx\,dt_1\cdots \,dt_{\overline{M}}
	\end{split}\]
	and similarly
	\[\begin{split}
		\int_{\rn}\prod_{j=1}^{\overline{M}}\widetilde{F}_j(u_j^*)\,dx & =\int_0^\infty\cdots\int_0^\infty\int_{\rn}\prod_{j=1}^{\overline{M}}\chi_{\{\widetilde{F}_j(u_j^*)>t_j\}}(x)\,dx\,dt_1\cdots \,dt_{\overline{M}}\\
		& =\int_0^\infty\cdots\int_0^\infty\int_{\rn}\prod_{j=1}^{\overline{M}}\chi_{\{\widetilde{F}_j(|u_j|)>t_j\}^*}(x)\,dx\,dt_1\cdots \,dt_{\overline{M}}
	\end{split}\]
	because
	\[
	\{\widetilde{F}_j(u_j^*)>t_j\}=\{u_j^*>\widetilde{F}_j^{-1}(t_j)\}=\{|u_j|>\widetilde{F}_j^{-1}(t_j)\}^*=\{\widetilde{F}_j(|u_j|)>t_j\}^*,
	\]
	therefore it suffices to prove that for every Borel $A_1,\dots,A_{\overline{M}} \subset \rn$
	\[
	\int_{\rn}\prod_{j=1}^{\overline{M}}\chi_{A_j}(x)\,dx\le\int_{\rn}\prod_{j=1}^{\overline{M}}\chi_{A_j^*}(x)\,dx,
	\]
	i.e., $|\cap_{j=1}^{\overline{M}}A_j|\le|\cap_{j=1}^{\overline{M}}A_j^*|$.
	
	Up to relabelling the sets, we can assume that $|A_1|\le\cdots\le|A_{\overline{M}}|$, whence $A_1^*\subset\cdots\subset A_{\overline{M}}^*$ and so $|\cap_{j=1}^{\overline{M}}A_j|\le|A_1|=|A_1^*|=|\cap_{j=1}^{\overline{M}}A_j^*|$.
	
	Now assume that some $\widetilde{F}_j$'s are continuous with $\widetilde{F}_j(0)=0$. We want to prove that, as in the previous case, $\{\widetilde{F}_j(u_j^*)>t_j\}=\{\widetilde{F}_j(|u_j|)>t_j\}^*$. Since $\widetilde{F}_j \colon [0,\infty[ \to [0,\infty[$ is nondecreasing and $\widetilde{F}_j(0)=0$, following e.g. \cite[p. 10]{RaoRen} we can define the \textit{generalized inverse function} $\widetilde{F}_j^{-1}\colon[0,\infty[\to[0,\infty[$ as
	\[
	\widetilde{F}_j^{-1}(t):=\inf\{s>0:\widetilde{F}_j(s)>t\}.
	\]
	Then it suffices to prove that $\{\widetilde{F}_j(v)>t\}=\{v>\widetilde{F}_j^{-1}(t)\}$ for every $t>0$ and every measurable $v\colon\rn\to[0,\infty[$ that vanishes at infinity. Observe that $\widetilde{F}_j^{-1}(t)=\infty$ if $t\ge\widetilde{F}_j\bigl(v(x)\bigr)$ for every $x\in\rn$ and, in that case, both sets above equal the empty set, hence we can assume that $t<\mathrm{ess}\sup\widetilde{F}_j\circ v$.
	
	Let $x\in\rn$ such that $\widetilde{F}_j\bigl(v(x)\bigr)>t$. Since $\widetilde{F}_j$ is continuous and $\widetilde{F}_j(0)=0$, there exists $s>0$ such that $t<\widetilde{F}_j(s)<\widetilde{F}_j\bigl(v(x)\bigr)$. Then $\widetilde{F}_j^{-1}(t)\le s$ and, since $\widetilde{F}_j$ is nondecreasing, $s<v(x)$. Now let $x\in\rn$ such that $v(x)>\widetilde{F}_j^{-1}(t)$. From the properties of infima, there exists $s>0$ such that $\widetilde{F}_j(s)>t$ and $v(x)\ge s$. Since $\widetilde{F}_j$ is nondecreasing, $\widetilde{F}_j\bigl(v(x)\bigr)\ge\widetilde{F}_j(s)$.
\end{proof}

The next result is inspired from \cite{Gidas}.

\begin{Lem}\label{L-N5}
	Let $f\colon[0,\infty[\to[0,\infty[$ be a continuous function that satisfies (P) and such that $f(t)>0$ if $t>0$. Then the problem
	\begin{equation}\label{e-Gidas}
		\begin{cases}
			-\Delta u\ge f(u)\\
			u\ge0\\
			u\in\cC^2(\rn)\cap L^\infty(\rn)
		\end{cases}
	\end{equation}
	does not admit positive solutions.
\end{Lem}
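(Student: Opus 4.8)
The plan is to follow the Gidas--Spruck / De Giorgi--type nonexistence strategy adapted to the differential inequality $-\Delta u \ge f(u)$, exploiting that near $t=0$ the nonlinearity $f$ dominates a pure power $t^q$ with $q \le N/(N-2)$, i.e.\ a power that is \emph{subcritical or critical} for the operator $-\Delta$ in the sense relevant to Liouville theorems. Suppose, for contradiction, that $u$ is a positive solution of \eqref{e-Gidas}. Since $u \in L^\infty(\rn)$ and, by hypothesis (P), $\liminf_{t\to0^+} f(t)/t^q > 0$, and since $f(t)>0$ for $t>0$ forces a uniform lower bound of $f$ away from $0$ on compact subsets of $]0,\infty[$, there is a constant $c>0$ with $f(t) \ge c\,t^q$ for all $t \in [0, |u|_\infty]$. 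Hence $-\Delta u \ge c\, u^q$ in $\rn$, and it suffices to show that this inequality has no positive bounded solution.

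The key steps, in order, are as follows. First I would establish a \emph{mean-value / decay estimate} for $u$: using the superharmonicity $-\Delta u \ge 0$ together with the representation via the Newtonian potential (or directly a Harnack-type argument on the inequality $-\Delta u \ge c u^q$), one shows $\inf_{B_R} u \gtrsim R^{-(N-2)}$ is impossible unless $u$ decays, and more precisely one derives that $u(x) \to 0$ as $|x|\to\infty$ or at least that spherical averages decay. Second, and this is the heart of the matter, I would test the inequality against a suitable cutoff: let $\varphi_R(x) = \varphi(x/R)$ with $\varphi \in C_c^\infty(B_2)$, $\varphi \equiv 1$ on $B_1$, and integrate $-\Delta u \ge c u^q$ against $\varphi_R^m\, u^{-s}$ for appropriately chosen exponents $m$ large and $s>0$ small (a nonlinear Rellich--Pohozaev test function of the type $u^{-s}$ is exactly what makes the critical exponent $N/(N-2)$ appear). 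Integration by parts produces, on the left, a good negative term $-s\int \varphi_R^m u^{-s-1}|\nabla u|^2$ plus a boundary-annulus error controlled by $\|\nabla\varphi_R\|$, and on the right the term $c\int \varphi_R^m u^{q-s}$. Balancing the powers so that the $|\nabla u|^2$ term can be absorbed via Cauchy--Schwarz against the annular error, and letting $R\to\infty$, the scaling in $R$ of the error term is $R^{N-2 - \theta}$ for some $\theta \ge 0$ that is nonnegative precisely because $q \le N/(N-2)$; combined with the decay of $u$ at infinity this forces $\int_{\rn} u^{q-s} = 0$, hence $u \equiv 0$, a contradiction. (Alternatively, for this range of $q$ one may simply cite the classical Gidas--Spruck nonexistence theorem for $-\Delta u = u^q$ on $\rn$ in the subcritical range and a direct comparison/barrier argument for the critical case $q = N/(N-2)$, but since the paper says the lemma is ``inspired from \cite{Gidas}'' I would reproduce the self-contained cutoff argument.)

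The main obstacle I anticipate is the critical borderline case $q = N/(N-2)$ (which corresponds to $2_\# $-type criticality only when $N$ is small, but here is genuinely the conformal-sublinear threshold for the inequality): at $q = N/(N-2)$ the scaling exponent $\theta$ vanishes, so the annular error term does \emph{not} tend to zero by brute force and one must instead extract smallness from the decay of $u$ itself on the annulus $B_{2R}\setminus B_R$, i.e.\ combine the first step's decay estimate with the cutoff estimate rather than use the cutoff alone. A clean way to handle this is to first upgrade the decay: from $-\Delta u \ge c u^{N/(N-2)} \ge 0$ and boundedness, a Kelvin transform $v(x) = |x|^{-(N-2)} u(x/|x|^2)$ turns the problem near infinity into a problem near the origin where the critical power is invariant, and a capacity/removable-singularity argument shows $v$ — hence $u$ — cannot be positive. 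I would present the subcritical case via the cutoff computation and dispatch the critical case by this Kelvin-transform reduction, noting that the regularity $u \in C^2 \cap L^\infty$ assumed in \eqref{e-Gidas} is exactly what legitimizes all the integrations by parts and the pointwise manipulations.
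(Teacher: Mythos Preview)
Your reduction to $-\Delta u \ge C u^{q}$ on $\rn$ with $1<q\le N/(N-2)$ is exactly what the paper does (it first notes that one may take $q>1$, then uses continuity of $f$, $f(t)>0$ for $t>0$, and $u\in L^\infty$ to get the global lower bound $f(u)\ge C u^{q}$). After that, however, the paper does not reproduce any argument: it simply invokes \cite[Proof of Theorem 8.4]{QS}. That reference contains the standard rescaled test-function (capacity) proof, which multiplies the inequality only by $\varphi_R^{m}$ with $m\ge 2q'$, integrates by parts once, and uses H\"older to obtain $\int u^{q}\varphi_R^{m}\le C R^{\,N-2q'}$; for $q<N/(N-2)$ the exponent is negative and one lets $R\to\infty$, while for $q=N/(N-2)$ one first deduces $u^{q}\in L^1(\rn)$ and then repeats the estimate with the H\"older step localized to the annulus $B_{2R}\setminus B_R$, whose contribution now vanishes. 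No factor $u^{-s}$, no a priori decay of $u$, and no Kelvin transform are needed.

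Your plan is workable in the subcritical range, but two of the extra ingredients are problematic. The appeal to ``decay of $u$ at infinity'' is not established by superharmonicity and $L^\infty$ alone (constants are bounded superharmonic), so you would be feeding the conclusion back into the argument. More seriously, the Kelvin transform is \emph{not} well suited to the borderline $q=N/(N-2)$: conformal invariance of $-\Delta u \ge u^{q}$ occurs at $q=(N+2)/(N-2)$, and for $q=N/(N-2)$ the transformed inequality picks up a singular weight $|x|^{-2}$, so the ``removable singularity'' step you outline does not close cleanly. The two-pass cutoff argument above is the correct and much simpler way to handle the critical case, and it is what the paper implicitly uses via the citation.
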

\begin{proof}
	We can assume $q>1$. If $u$ is a solution to \eqref{e-Gidas}, then there exists $C=C(u)>0$ such that $f\bigl(u(x)\bigr)\ge Cu^q(x)$ for every $x\in\rn$. Then we argue as in \cite[Proof of Theorem 8.4]{QS}.
\end{proof}

\begin{proof}[Proof of Theorem \ref{T-main2} (a)]
	In virtue of Remarks \ref{R-cbb} and \ref{R-neg} let $u^n\in\cD$ such that $J(u^n) \to \inf_{\cD}J < 0$ and $u\in\cD$ such that, up to a subsequence, $u^n \rightharpoonup u$ in $\hrn^M$. From Lemma \ref{L-Schwarz}, up to replacing $u^n$ with its Schwarz rearrangement, we can assume that each $u^n$ -- and consequently $u$ -- is radial, nonnegative, and radially nonincreasing; in particular, $u^n \to u$ in $L^p(\rn)^M$ for every $2<p<2^*$ and so, arguing as in the proof of Lemma \ref{L-min}, $J(u) = \inf_{\cD}J$. Then, from \cite[Proposition A.1]{MedSc}, there exists $\lambda \in [0,\infty[^M$ such that $-\Delta u_j + \lambda_j u_j = \partial_jF(u)$ for every $j \in \{1,\dots,M\}$, hence $u \in W_\textup{loc}^{2,p}(\rn)^M$ for every $p<\infty$ owing to \cite[Theorem 2.3]{BrezisLiebV}. Then we can argue as in \cite[Proof of Lemma 1]{BerLionsI} to obtain that $u \in \cC^2(\rn)^M$. Now assume by contradiction that $\lambda_j=0$ for some $j \in \{1,\dots,M\}$. Up to changing the order, we can assume that $j=M$. Then $-\Delta u_M \ge F_M'(u_M) \ge 0$. This yields that $u_M=0$: if $N\le4$, then it follows from \cite[Lemma A.2]{Ikoma}; if each $F_j'|_{[0,\infty[}$ satisfies (P), then it follows from Lemma \ref{L-N5}. For every $j\in\{1,\dots,M\}$ define
	\[\begin{split}
		J_j(w) & = \int_{\rn} \frac12|\nabla w|^2-F_j(w) \, dx, \quad w\in\hrn,\\
		\cD(j) & = \Set{w\in\hrn|\int_{\rn} w^2 \, dx\le a_j^2}.
	\end{split}\]
	Of course, $-\Delta u_j+\lambda_ju_j=F_j'(u_j)$ and, from Lemmas \ref{L-cbb} and \ref{L-neg}, $-\infty < m_j := \inf_{\cD(j)}J_j<0$ for every $j\in\{1,\dots,M\}$. Moreover,
	\[
	J(u) = \sum_{j=1}^{M-1} J_j(u_j) \ge \sum_{j=1}^{M-1} m_j.
	\]
	Since $\widetilde{F}_{\ell,j}\ge0$ for every $\ell \in \{1,\dots,L\}$ and $j \in \{1,\dots,M\}$, $J(w) \le \sum_{j=1}^M J_j(w_j)$ for every $w = (w_1,\dots,w_M) \in \hrn^M$, thus
	\[
	\sum_{j=1}^M m_j \ge \inf_{\cD}J = J(u) \ge \sum_{j=1}^{M-1} m_j,
	\]
	whence $m_M\ge0$, a contradiction. Then for every $j \in \{1,\dots,M\}$ there holds $\lambda_j>0$ and, in particular, $|u_j|_2 = a_j$. That each $u_j$ is positive follows from the strong maximum principle \cite[Theorem VI.IV.4]{Evans}.
\end{proof}

If $u,v\colon\rn\to\R$ are two Borel functions that vanish at infinity, $t>0$, and $x\in\rn$, we define $A^*(u,v;t) := B_r$, where $r\ge0$ is chosen so that
\[
\left|B_r\right| = \left|\Set{ x\in\rn | \left|u(x)>t\right| }\right| + \left|\Set{ x\in\rn | \left|v(x)>t\right| }\right|,
\]
and
\[
\{u,v\}^* := \int_0^\infty\chi_{A^*(u,v;t)}(x) \, dt.
\]
Observe that $A^*(u,v;t) = A^*(v,u;t)$ and, consequently, $\{u,v\}^* = \{v,u\}^*$.

\begin{Lem}\label{L-Shibata}
	Let $u,v\colon\rn\to\R$ be as above.
	\begin{itemize}
		\item [(a)] $\{u,v\}^*$ is nonnegative, radial, nonincreasing, lower semicontinuous, and for every $t>0$ there holds $\Set{x\in\rn | \{u,v\}^*>t} = A^*(u,v;t)$.
		\item [(b)] If $\Phi\colon[0,\infty[\to[0,\infty[$ is nondecreasing, lower semicontinuous, continuous at $0$, and such that $\Phi(0)=0$, then
		\[
		\{\Phi(|u|),\Phi(|v|)\}^* = \Phi(\{u,v\}^*).
		\]
		\item [(c)] If $\Phi\colon[0,\infty[\to\R$ is monotone and such that $\Phi\circ|u|,\Phi\circ|v| \in L^1(\rn)$, then
		\[
		\int_{\rn} \Phi(\{u,v\}^*) \, dx = \int_{\rn} \Phi(|u|) + \Phi(|v|) \, dx.
		\]
		\item [(d)] If $u,v\in\hrn$, then $\{u,v\}^*\in\hrn$ and $|\nabla\{u,v\}^*|_2^2 \le |\nabla u|_2^2 + |\nabla v|_2^2$. If, moreover, $u,v\in\cC^1(\rn)\setminus\{0\}$ are radial, positive, and nonincreasing, then
		\[
		|\nabla\{u,v\}^*|_2^2 < |\nabla u|_2^2 + |\nabla v|_2^2.
		\]
		\item [(e)] Let $\overline{M}\ge2$ be an integer and $u_j,v_j\ge0$ be Borel functions that vanish at infinity, $j\in\{1,\dots,\overline{M}\}$. Then
		\[
		\int_{\rn} \prod_{j=1}^{\overline{M}} u_j + \prod_{j=1}^{\overline{M}} v_j \, dx \le \int_{\rn} \prod_{j=1}^{\overline{M}} \{u_j,v_j\}^* \, dx.
		\]
	\end{itemize}
\end{Lem}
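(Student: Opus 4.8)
Throughout, all five claims rest on the layer-cake (bathtub) representation. For a function $w$ that vanishes at infinity write $\mu_w(t):=\bigl|\{|w|>t\}\bigr|$; this is nonincreasing, right-continuous, and finite on $]0,\infty[$. By construction $A^*(u,v;t)=B_{r(t)}$ with $|B_{r(t)}|=\mu_u(t)+\mu_v(t)$, so $r$ is nonincreasing and right-continuous and $\{u,v\}^*(x)=\bigl|\{t>0:|x|<r(t)\}\bigr|$. For \emph{(a)}: nonnegativity is immediate; radiality and radial monotonicity hold because each $\chi_{B_{r(t)}}$ is radial and, $r$ being nonincreasing, $|x_1|\le|x_2|$ gives $\chi_{B_{r(t)}}(x_1)\ge\chi_{B_{r(t)}}(x_2)$ for every $t$. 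For the level-set identity, $\{u,v\}^*(x)>t$ holds iff $|x|<r(\tau)$ for some $\tau>t$, which by monotonicity and right-continuity of $r$ is equivalent to $|x|<r(t)$, i.e.\ to $x\in A^*(u,v;t)$; as this set is an open ball, $\{u,v\}^*$ is lower semicontinuous.

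For \emph{(b)} and \emph{(c)} I would use the generalized inverse $\Phi^{-1}(t):=\inf\{s>0:\Phi(s)>t\}$, exactly as in the proof of Lemma~\ref{L-Schwarz}. Lower semicontinuity of $\Phi$ makes $\{\Phi>t\}$ open, hence (being upward closed) equal to $]\Phi^{-1}(t),\infty[$, and continuity of $\Phi$ at $0$ with $\Phi(0)=0$ gives $\Phi^{-1}(t)>0$; thus $\{\Phi(s)>t\}=\{s>\Phi^{-1}(t)\}$ for every $t>0$, and in particular $\Phi(|u|),\Phi(|v|)$ vanish at infinity. Evaluating this with $s=|u(x)|,|v(x)|,\{u,v\}^*(x)$ and using (a) gives, for every $t>0$,
\[
\{\Phi(\{u,v\}^*)>t\}=\{\{u,v\}^*>\Phi^{-1}(t)\}=A^*\bigl(u,v;\Phi^{-1}(t)\bigr)=A^*\bigl(\Phi(|u|),\Phi(|v|);t\bigr),
\]
because $\mu_u(\Phi^{-1}(t))+\mu_v(\Phi^{-1}(t))=\mu_{\Phi(|u|)}(t)+\mu_{\Phi(|v|)}(t)$; by (a) again the right-hand side equals $\{\{\Phi(|u|),\Phi(|v|)\}^*>t\}$, and two nonnegative functions with identical superlevel sets coincide, which is (b). Integrating the resulting identity $\mu_{\Phi(\{u,v\}^*)}=\mu_{\Phi(|u|)}+\mu_{\Phi(|v|)}$ over $]0,\infty[$ yields (c) when $\Phi$ is nondecreasing with $\Phi(0)=0$ (hence $\Phi\ge0$ on $[0,\infty[$); a general monotone $\Phi$ reduces to this case by passing to $-\Phi$ if needed, the $L^1$ hypotheses on $\Phi\circ|u|,\Phi\circ|v|$ together with the vanishing at infinity of $u,v$ forcing the normalization $\Phi(0)=0$.

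For \emph{(d)}, the $L^2$ identity $|\{u,v\}^*|_2^2=|u|_2^2+|v|_2^2$ is (c) with $\Phi(t)=t^2$. For the gradient bound the plan is to use the coarea formula and the isoperimetric inequality to get, for any $w\in\hrn$ vanishing at infinity, the P\'olya--Szeg\H{o}-type estimate $|\nabla w|_2^2\ge\gamma(\mu_w)$, where $\gamma(\mu):=c_N\int_0^\infty\mu(t)^{2-2/N}\bigl(-\mu'(t)\bigr)^{-1}\,dt$ depends on $w$ only through $\mu_w$, with equality when $w$ is radial and nonincreasing. Since by (a) the function $\{u,v\}^*$ is radial, nonincreasing, with distribution function $\mu_u+\mu_v$, the bound reduces to the subadditivity $\gamma(\mu_u+\mu_v)\le\gamma(\mu_u)+\gamma(\mu_v)$, which in turn follows from the pointwise inequality
\[
\frac{(a+b)^{2-2/N}}{c+d}\le\frac{a^{2-2/N}}{c}+\frac{b^{2-2/N}}{d}\qquad(a,b\ge0,\ c,d>0),
\]
a consequence of the convexity of $\xi\mapsto\xi^{2-2/N}$ on $[0,\infty[$ (for $N=1$ the exponent is $0$ and the inequality is trivial). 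When in addition $u,v\in\cC^1(\rn)\setminus\{0\}$ are radial, positive, and nonincreasing, equality holds in the P\'olya--Szeg\H{o} estimate for $u$ and for $v$, so strictness reduces to strictness of the pointwise inequality on a set of $t$ of positive measure; this holds since $\mu_u(t),\mu_v(t)>0$ there and $\xi\mapsto\xi^{2-2/N}$ is strictly convex for $N\ge3$, the low-dimensional cases following from the strict form of the P\'olya--Szeg\H{o} inequality (after discarding, if necessary, the $t$'s at which $u$ or $v$ is locally constant).

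Finally, \emph{(e)}: writing $\{u_j,v_j\}^*(x)=\int_0^\infty\chi_{A^*(u_j,v_j;t_j)}(x)\,dt_j$ by (a) and using Tonelli's theorem,
\[
\int_{\rn}\prod_{j=1}^{\overline M}\{u_j,v_j\}^*\,dx=\int_{]0,\infty[^{\overline M}}\Bigl|\,\bigcap_{j=1}^{\overline M}A^*(u_j,v_j;t_j)\Bigr|\,dt,
\]
and analogously $\int_{\rn}\prod_j u_j\,dx=\int_{]0,\infty[^{\overline M}}\bigl|\bigcap_j\{u_j>t_j\}\bigr|\,dt$ and likewise with $v_j$; it thus suffices to show, for fixed $(t_1,\dots,t_{\overline M})$, that $\bigl|\bigcap_j\{u_j>t_j\}\bigr|+\bigl|\bigcap_j\{v_j>t_j\}\bigr|\le\bigl|\bigcap_j A^*(u_j,v_j;t_j)\bigr|$. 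Relabelling so that $|A^*(u_1,v_1;t_1)|\le\dots\le|A^*(u_{\overline M},v_{\overline M};t_{\overline M})|$, the right-hand side equals $|A^*(u_1,v_1;t_1)|=|\{u_1>t_1\}|+|\{v_1>t_1\}|$, which dominates the left-hand side because $|\bigcap_j\{u_j>t_j\}|\le|\{u_1>t_1\}|$ and $|\bigcap_j\{v_j>t_j\}|\le|\{v_1>t_1\}|$ — exactly the argument used in Lemma~\ref{L-Schwarz}. The main obstacle is the gradient inequality in (d): making the P\'olya--Szeg\H{o} lower bound $|\nabla w|_2^2\ge\gamma(\mu_w)$ and its equality case precise requires care with the coarea formula when the level sets of $w$ are irregular and with the singular part of $\mu_w$ (flat regions of $w$), and the strict inequality additionally needs a Brothers--Ziemer-type analysis.
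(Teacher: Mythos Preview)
Your treatment of part (e) is essentially the paper's own argument: layer-cake plus Tonelli reduces the claim to the set-theoretic inequality
\[
\Bigl|\bigcap_{j}\{u_j>t_j\}\Bigr|+\Bigl|\bigcap_{j}\{v_j>t_j\}\Bigr|\le\Bigl|\bigcap_{j}A^*(u_j,v_j;t_j)\Bigr|,
\]
and both you and the paper dispatch this by observing that the right-hand side is the minimum of $|\{u_j>t_j\}|+|\{v_j>t_j\}|$ over $j$ (you phrase it via relabelling, the paper via $\min_j$). For parts (a)--(d) the paper gives no argument at all and simply refers to \cite{Ikoma,Shibata_2017}, so your sketches go beyond what the paper does; they follow the standard route of those references.

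One concrete slip in your sketch of (d): the pointwise inequality
\[
\frac{(a+b)^{2-2/N}}{c+d}\le\frac{a^{2-2/N}}{c}+\frac{b^{2-2/N}}{d}
\]
does \emph{not} follow from convexity of $\xi\mapsto\xi^{2-2/N}$. Jensen with weights $c/(c+d)$, $d/(c+d)$ only yields $(a+b)^{p}/(c+d)^{p-1}\le a^{p}/c^{p-1}+b^{p}/d^{p-1}$, which is a different inequality. The correct argument is: minimising the right-hand side over $c,d>0$ with $c+d$ fixed gives $(a^{p/2}+b^{p/2})^2$, and then the claim is equivalent to $(a+b)^{p/2}\le a^{p/2}+b^{p/2}$, i.e.\ to the subadditivity (concavity) of $\xi\mapsto\xi^{p/2}$ for $p\le2$. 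This also gives the strict form for all $N\ge2$ whenever $a,b>0$, so you need not treat $N=2$ separately. Your reduction in (c) to $\Phi(0)=0$ is also a bit hasty (the $L^1$ hypothesis alone does not force $\Phi(0)=0$), but the standard references handle (c) directly via the level-set identity in (a) without passing through (b).
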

\begin{proof}
	The proofs of points (a)--(d) can be found in \cite{Ikoma,Shibata_2017}. Concerning the proof of point (e), it is similar to \cite[Proof of Lemma A.1 (v)]{Ikoma} (given for $\overline{M}=2$), but we include it here for the reader's convenience. From \cite[Theorem 1.13]{LiebLoss} we have
	\[\begin{split}
		\int_{\rn} \prod_{j=1}^{\overline{M}} u_j + \prod_{j=1}^{\overline{M}} v_j \, dx & = \int_0^\infty \dots \int_0^\infty \left| \bigcap_{j=1}^{\overline{M}}\{u_j>s_j\} \right| + \left| \bigcap_{j=1}^{\overline{M}}\{v_j>s_j\} \right| \, ds_1 \dots ds_{\overline{M}},\\
		\int_{\rn} \prod_{j=1}^{\overline{M}} \{u_j,v_j\}^* \, dx & = \int_0^\infty \dots \int_0^\infty \left| \bigcap_{j=1}^{\overline{M}} A^*(u_j,v_j;s_j) \right| ds_1 \dots ds_{\overline{M}}.
	\end{split}\]
	If we set $C_j := \Set{x\in\rn | u_j(x) > s_j}$ and $D_j := \Set{x\in\rn | v_j(x) > s_j}$, then it suffices to prove that
	\[
	\left|\bigcap_{j=1}^{\overline{M}} C_j\right| + \left|\bigcap_{j=1}^{\overline{M}} D_j\right| \le \left|\bigcap_{j=1}^{\overline{M}} A^*(u_j,v_j;s_j)\right|,
	\]
	which is true because
	\[\begin{split}
		\left|\bigcap_{j=1}^{\overline{M}} A^*(u_j,v_j;s_j)\right| & = \min_{1\le j\le \overline{M}} |C_j| + |D_j| \ge \min_{1\le j\le \overline{M}} |C_j| + \min_{1\le j\le \overline{M}} |D_j|\\
		& \ge \left|\bigcap_{j=1}^{\overline{M}} C_j\right| + \left|\bigcap_{j=1}^{\overline{M}} D_j\right|.\qedhere
	\end{split}\]
\end{proof}

\begin{Lem}\label{L-cptmin2}
	If (F0)--(F3), \eqref{e-etas}, and \eqref{e-etal} hold and $F$ is as in the first part of Theorem \ref{T-main2} (b) (possibly with $\eta_0<\infty$), then every minimizing sequence of $J|_{\cD}$ is relatively compact in $L^p(\rn)^M$ up to translations, $2<p<2^*$.
\end{Lem}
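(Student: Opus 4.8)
Proof plan. The plan is to follow the proof of Lemma~\ref{L-cptmin} step by step, with the scalar masses replaced by $M$-tuples and the coupling terms of $F$ handled separately. Let $u^n\in\cD$ with $J(u^n)\to\inf_\cD J=m(a)$; it is bounded in $\hrn^M$ by Lemma~\ref{L-cbb} (cf.\ Remark~\ref{R-cbb}), and $m(a)<0$ by Remark~\ref{R-neg}~(ii). Two facts about $F$ are used: since $F$ is a sum of nonnegative terms, each controlled via (F1)--(F2), one has $F_+(w)\le c_\varepsilon|w|^2+(\varepsilon+\eta_\infty)|w|^{2_\#}$ for all $w$; and the functionals $w\mapsto\int_{\rn}F_j(w_j)\,dx$, $w\mapsto\int_{\rn}\widetilde{F}_{i,j}(w_i)\widetilde{F}_{j,i}(w_j)\,dx$, and hence $w\mapsto\int_{\rn}F(w)\,dx$, obey the Brezis--Lieb splitting along sequences converging weakly in $\hrn^M$ and a.e.\ (for the products this is legitimate because $\widetilde{F}_{i,j}(0)=0$ and, by (F1), they have $L^2$- and $L^{2_\#}$-controlled growth, so \cite[Theorem 1]{BrezisLieb} applies as in Lemma~\ref{L-cptmin}).

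First I rule out vanishing: if $\lim_n\max_{y\in\rn}\int_{B(y,1)}|u^n|^2\,dx=0$, then \cite[Lemma I.1]{Lions84_2} applied to each component gives $u^n\to0$ in $L^{2_\#}(\rn)^M$, whence $\int_{\rn}F_+(u^n)\,dx\to0$ and $\liminf_nJ(u^n)\ge0$, contradicting $m(a)<0$. So, passing to a subsequence and translating, there are $u\in\cD\setminus\{0\}$ with $u^n\rightharpoonup u$ in $\hrn^M$ and $u^n\to u$ a.e.; put $v^n:=u^n-u\rightharpoonup0$. Brezis--Lieb gives $\lim_n\bigl(J(u^n)-J(v^n)\bigr)=J(u)$ and $\lim_n\bigl(|u^n_j|_2^2-|v^n_j|_2^2\bigr)=|u_j|_2^2$ for each $j$. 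If $v^n$ vanishes, then $v^n\to0$ in $L^p(\rn)^M$, $2<p<2^*$, by \cite[Lemma I.1]{Lions84_2}, so $u^n(\cdot-y_n)\to u$ in $L^p(\rn)^M$ and the conclusion holds (and, by weak lower semicontinuity, $u$ then minimizes $J|_\cD$).

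If $v^n$ does not vanish, I extract a second bubble: there are $v\in\hrn^M\setminus\{0\}$ and $z_n$ with $|z_n|\to\infty$ (otherwise $v$ would be absorbed into $u$) such that $w^n:=v^n(\cdot-z_n)-v\rightharpoonup0$, $\lim_n\bigl(J(v^n)-J(w^n)\bigr)=J(v)$, and $\lim_n\bigl(|v^n_j|_2^2-|w^n_j|_2^2\bigr)=|v_j|_2^2$. Write $b:=|u|_2,\ c:=|v|_2\in[0,\infty[^M\setminus\{0\}^M$ and, after a further subsequence, $d_j^2:=\lim_n|w^n_j|_2^2$; then $a^2\ge b^2+c^2+d^2$ componentwise. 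As in Lemma~\ref{L-cptmin}, $\liminf_nJ(w^n)\ge m(d)$ (with $m(0)=0$): if $d=0$ this is $\int_{\rn}F_+(w^n)\,dx\to0$ via (F1), (F2), \eqref{e-GN}; if $d\ne0$ one normalizes $w^n$ to lie in $\cS(d)$ --- removing the coordinates with $d_j=0$, whose removal changes $J$ by $-\frac12|\nabla w^n_j|_2^2+o(1)\le o(1)$ --- and applies \cite[Lemma 2.4]{Shibata_2014}. Using the subadditivity (Remark~\ref{R-subadd}~(ii), iterated) and monotonicity of $m$ (Proposition~\ref{P-gsem}),
\[
m(a)=J(u)+J(v)+\lim_nJ(w^n)\ge m(b)+m(c)+m(d)\ge m(\sqrt{b^2+c^2+d^2})\ge m(a),
\]
so every inequality is an equality; in particular $J(u)=m(b)$ and $J(v)=m(c)$, i.e.\ both $m(b)$ and $m(c)$ are attained, at the nontrivial $u$ and $v$.

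The main obstacle --- and the only place where the form of $F$ in Theorem~\ref{T-main2}~(b) is used --- is to contradict this via the \emph{strict} subadditivity $m(\sqrt{b^2+c^2})<m(b)+m(c)$, which forces $m(\sqrt{b^2+c^2+d^2})<m(b)+m(c)+m(d)$ and destroys the chain. I would derive it from Proposition~\ref{P-gsem}. Replace $u,v$ by their Schwarz rearrangements: by Lemma~\ref{L-Schwarz} and the P\'olya--Szeg\H{o} inequality these are again minimizers of $m(b)$, $m(c)$, hence (by elliptic regularity and the strong maximum principle, as in the proof of Theorem~\ref{T-main2}~(b)) radial, nonnegative, radially nonincreasing, of class $\cC^1$, and positive on $\rn$ wherever nontrivial. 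Form $\{u,v\}^*:=(\{u_1,v_1\}^*,\dots,\{u_M,v_M\}^*)$. By Lemma~\ref{L-Shibata}: the $L^2$ norms add, so $\{u,v\}^*\in\cD(\sqrt{b^2+c^2})$ (part (c)); $\int_{\rn}F_j(\{u_j,v_j\}^*)\,dx=\int_{\rn}F_j(u_j)+F_j(v_j)\,dx$ (part (c)); $\int_{\rn}\widetilde{F}_{i,j}(\{u_i,v_i\}^*)\widetilde{F}_{j,i}(\{u_j,v_j\}^*)\,dx\ge\int_{\rn}\widetilde{F}_{i,j}(u_i)\widetilde{F}_{j,i}(u_j)+\widetilde{F}_{i,j}(v_i)\widetilde{F}_{j,i}(v_j)\,dx$ (parts (b), (e)); and $|\nabla\{u,v\}^*|_2^2\le|\nabla u|_2^2+|\nabla v|_2^2$ (part (d)). Thus $J(\{u,v\}^*)\le J(u)+J(v)=m(b)+m(c)$, and the inequality is strict: if some coordinate $j$ has both $u_j,v_j$ nontrivial, then part (d) gives $|\nabla\{u_j,v_j\}^*|_2^2<|\nabla u_j|_2^2+|\nabla v_j|_2^2$; if not, the nontrivial coordinates of $u$ and of $v$ are disjoint, and picking $i$ nontrivial for $u$ and $j$ nontrivial for $v$, the radial functions $\{u_i,v_i\}^*=u_i$ and $\{u_j,v_j\}^*=v_j$ are positive near the origin, so the $(i,j)$ coupling integral of $\{u,v\}^*$ is strictly positive while it vanishes for $u$ (as $u_j\equiv0$) and for $v$ (as $v_i\equiv0$) --- precisely because $\widetilde{F}_{i,j},\widetilde{F}_{j,i}>0$ on $\R\setminus\{0\}$. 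In either case $J(\{u,v\}^*)<m(b)+m(c)$, hence $m(\sqrt{b^2+c^2})<m(b)+m(c)$, the desired contradiction; therefore $v^n$ vanishes and the lemma follows. The Brezis--Lieb splittings for the products and the normalization estimate near $\cS(d)$, within the growth permitted by (F0)--(F1), are routine.
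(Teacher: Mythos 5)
Your proposal is correct and follows essentially the same route as the paper: the same two-bubble concentration-compactness decomposition with the chain of inequalities for $m$, followed by the rearrangement $(\{u_j,v_j\}^*)_{j=1}^M$ of the two (regular, positive, radially nonincreasing) minimizers, with the identical two-case strictness argument (strict P\'olya--Szeg\H{o}-type gradient inequality from Lemma \ref{L-Shibata} (d) when some component is nontrivial for both, and strict gain in the coupling term, using $\widetilde{F}_{i,j}>0$ on $\R\setminus\{0\}$ and $\widetilde{F}_{i,j}(0)=0$, when the nontrivial components are disjoint). Packaging the conclusion as strict subadditivity $m(\sqrt{b^2+c^2})<m(b)+m(c)$ rather than contradicting the equality chain directly is only a cosmetic difference.
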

\begin{proof}
	We begin as in the proof of Lemma \ref{L-cptmin}, so let $u^n\in\cD$ such that $\lim_nJ(u^n) = \inf_{\cD}J$. There exist $u \in \cD \setminus \{0\}$ and $y_n\in\rn$ such that, up to a subsequence, $v^n := u^n(\cdot - y_n) - u \rightharpoonup 0$ in $\hrn^M$, $v^n\to0$ a.e. in $\rn$, and $\lim_n J(u^n) - J(v^n) = J(u)$. If $\lim_n \max_{y\in\rn} \int_{B(y,1)} |v^n|^2 \, dx = 0$, then the statement holds true; otherwise there exist $v\in\hrn^M\setminus\{0\}$ and $z_n\in\rn$ such that, up to a subsequence, $w^n := v^n(\cdot-z_n) - v \rightharpoonup 0$ in $\hrn^M$, $w^n\to0$ a.e. in $\rn$, $\lim_n J(v^n) - J(w^n) = J(v)$, and $\lim_n |u_j^n|_2^2 - |w_j^n|_2^2 = |u_j|_2^2 + |v_j|_2^2$ for every $j\in\{1,\dots,M\}$. Denoting $b_j := |u_j|_2$ and $c_j := |v_j|_2$ we have $a_j^2-b_j^2-c_j^2 \ge \liminf_n |w_j^n|_2^2 =: d_j^2 \ge0$. If $d=(d_1,\dots,d_M)=0$, then $w^n\to0$ in $L^2(\rn)^M$ and so $\liminf_nJ(w^n)\ge0$, thus using Remark \ref{R-subadd} (ii)
	\[
	m(a) = \lim_nJ(u_n) = J(u) + J(v) + \lim_nJ(w_n) \ge m(b) + m(c) \ge m(\sqrt{b^2+c^2}) \ge m(a);
	\]
	otherwise, define $\tilde{w}_j^n := \frac{d_j}{|w_j^n|_2}w_j^n$ if $d_j\ne0$, $\tilde{w}_j^n=0$ if $d_j=0$, and let $\tilde{w}^n := (\tilde{w}_1^n,\dots,\tilde{w}_M^n) \in \cS(d)$, hence again $\lim_n J(w^n) - J(\tilde{w}^n) = 0$ and so
	\[\begin{split}
		m(a) & = \lim_nJ(u_n) = J(u) + J(v) + \lim_nJ(w_n) = J(u) + J(v) + \lim_nJ(\tilde{w}_n)\\
		& \ge m(b) + m(c) + m(d) \ge m(\sqrt{b^2+c^2+d^2}) \ge m(a).
	\end{split}\]
	In particular, in either case, $J(u) = m(b)$ and $J(v) = m(c)$. Here is where this proof no longer follows that of Lemma \ref{L-cptmin}. Since $|u_j|_2 = |u_j^*|_2$, $|v_j|_2 = |v_j^*|_2$, and, in view of Lemma \ref{L-Schwarz}, $m(b)\le J(u^*)\le J(u)$ and $m(c)\le J(v^*)\le J(v)$, we can assume that $u$ and $v$ are radial, nonnegative, and radially nondecreasing. Moreover, since $u$ and $v$ are solutions to the differential equation in \eqref{e-main}, from \cite[Theorem 2.3]{BrezisLiebV} they are of class $\cC^1$, while from \cite[Theorem VI.IV.4]{Evans} each of their components is either identically $0$ or positive.
	
	\textit{Case 1:} there exists $k\in\{1,\dots,M\}$ such that $u_k\ne0$ and $v_k\ne0$. From Lemma \ref{L-Shibata} there holds $|\{u_j,v_j\}^*|_2^2 = b_j^2 + c_j^2$ for every $j\in\{1,\dots,M\}$ and
	\[\begin{split}
		|\nabla\{u_k,v_k\}^*|_2^2 < |\nabla u_k|_2^2 + |\nabla v_k|_2^2 \quad & \text{and} \quad |\nabla\{u_j,v_j\}^*|_2^2 \le |\nabla u_j|_2^2 + |\nabla v_j|_2^2,\\
		\int_{\rn} F_j(\{u_j,v_j\}^*) \, dx & = \int_{\rn} F_j(u_j) + F_j(v_j),\\
		\int_{\rn} F_{i,j}(\{u_i,v_i\}^*) F_{j,i}(\{u_j,v_j\}^*) \, dx & \ge \int_{\rn} F_{i,j}(u_i) F_{j,i}(u_j) + F_{i,j}(v_i) F_{j,i}(v_j) \, dx,
	\end{split}\]
	hence $J(u) + J(v) > J\bigl((\{u_j,v_j\}^*)_{j=1}^M\bigr)$ and we get the contradiction
	\[
	m(a) = J(u) + J(v) + \lim_n J(w^n) > m(\sqrt{b^2+c^2}) + m(d) \ge m(a).
	\]
	
	\textit{Case 2:} for every $j\in\{1,\dots,M\}$ there holds $u_j=0$ or $v_j=0$. Let $h,k\in\{1,\dots,M\}$, $h\ne k$, such that $u_h\ne0$ and $v_k\ne0$. We still have
	\[
	|\nabla\{u_j,v_j\}^*|_2^2 \le |\nabla u_j|_2^2 + |\nabla v_j|_2^2 \quad \text{and} \quad \int_{\rn} F_j(\{u_j,v_j\}^*) \, dx = \int_{\rn} F_j(u_j) + F_j(v_j).
	\]
	Moreover,
	\[\begin{split}
		\int_{\rn} F_{h,k}(\{u_h,v_h\}^*) F_{k,h}(\{u_k,v_k\}^*) \, dx & = \int_{\rn} F_{h,k}(\{u_h,v_h\}^*) F_{k,h}(\{v_k,u_k\}^*) \, dx\\
		& \ge \int_{\rn} F_{h,k}(u_h) F_{k,h}(v_k) + F_{h,k}(0) F_{k,h}(0) \, dx\\
		& > 0
	\end{split}\]
	and
	\[\begin{split}
		\int_{\rn} F_{i,j}(\{u_i,v_i\}^*) F_{j,i}(\{u_j,v_j\}^*) \, dx \ge \int_{\rn} F_{i,j}(u_i) F_{j,i}(u_j) + F_{i,j}(v_i) F_{j,i}(v_j) \, dx\\
		=\begin{cases}
			\int_{\rn} F_{i,j}(u_i) F_{j,i}(u_j) \, dx \quad \text{if } v_i=0 \text{ or } v_j=0,\\
			\int_{\rn} F_{i,j}(v_i) F_{j,i}(v_j) \, dx \quad \text{if } u_i=0 \text{ or } u_j=0,
		\end{cases}
	\end{split}\]
	thus we have again $J(u) + J(v) > J\bigl((\{u_j,v_j\}^*)_{j=1}^M\bigr)$ and conclude as before.
\end{proof}

\begin{proof}[Proof of Theorem \ref{T-main2} (b)]
	Thanks to Remarks \ref{R-cbb} and \ref{R-neg} let $u^n \in \cD$ such that $J(u^n) \to \inf_{\cD}J < 0$ and $u \in \cD$ such that, up to a subsequence, $u^n \rightharpoonup u$ in $\hrn^M$. Using Lemma \ref{L-cptmin2} we can argue as in the proof of Lemma \ref{L-min} and obtain that $\inf_{\cD}J = \lim_n J(u^n) \ge J(u) \ge \inf_{\cD}J$, hence from \cite[Proposition A.1]{MedSc} there exists $\lambda \in [0,\infty[^M$ such that $-\Delta u_j +\lambda_j u_j = \partial_jF(u)$ and, arguing as in the proof of Theorem \ref{T-main1}, we have that $\max_{j=1,\dots,M}\lambda_j > 0$. Moreover, up to replacing $u$ with its Schwarz rearrangement, we can assume that $u$ is radial, nonnegative (in fact, positive owing to \cite[Theorem VI.IV.4]{Evans}), and radially nonincreasing. Now we prove that $u\in\cS$, hence define $\bar{a} := (|u_j|_2)_{j=1}^M$ and assume by contradiction that $\bar{a}_j < a_j$ for some $j \in \{1,\dots,M\}$. Then from Remark \ref{R-subadd} (ii)  and Proposition \ref{P-gsem} (i) we get
	\[
	m(\bar{a}) \le J(u) = m(a) \le m\bigl(\sqrt{(a-\bar{a})^2+\bar{a}^2}\bigr) \le m(a-\bar{a}) + m(\bar{a})
	\]
	and so $m(a-\bar{a}) \ge 0$, in contrast with Remark \ref{R-neg}. The final part is proved exactly as in the proof of Theorem \ref{T-main2} (a).
\end{proof}

\begin{Rem}
	Note that the assumption $\eta_0=\infty$ is used in the proof of Theorem \ref{T-main2} (b) only to ensure $m(a-\bar{a})<0$ (because otherwise we could not know whether \eqref{e-etal} holds with $a-\bar{a}$ instead of $a$), therefore a minimizer of $J|_{\cD}$ exists also with $\eta_0<\infty$ and \eqref{e-etal} satisfied.
\end{Rem}

\begin{Cor}\label{C-cpt}
	In the assumptions of the first part of Theorem \ref{T-main1} or of the first part of Theorem \ref{T-main2} (b), every minimizing sequence for $J|_{\cD}$ is relatively compact in $\hrn^M$ up to translations
\end{Cor}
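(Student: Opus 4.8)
The plan is to upgrade the $L^p$-compactness furnished by Lemma \ref{L-cptmin} (when $M=1$) or Lemma \ref{L-cptmin2} (when $M\ge2$) to compactness in $\hrn^M$, exploiting that the weak limit is a minimizer which, by Theorem \ref{T-main1} (resp. Theorem \ref{T-main2} (b)), lies on $\cS$, together with the weak lower semicontinuity of the gradient norm and of $\int_{\rn}F_-$.

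First I would fix a minimizing sequence $u^n\in\cD$ for $J|_{\cD}$, which is bounded in $\hrn^M$ by Lemma \ref{L-cbb}. By Lemma \ref{L-cptmin} (resp. Lemma \ref{L-cptmin2}) there exist $u\in\cD$ and $y_n\in\rn$ such that, up to a subsequence and writing again $u^n$ for $u^n(\cdot-y_n)$, one has $u^n\rightharpoonup u$ in $\hrn^M$, $u^n\to u$ in $L^p(\rn)^M$ for every $2<p<2^*$, and $u^n\to u$ a.e. in $\rn$. Arguing as in the proof of Lemma \ref{L-min} (resp. of Theorem \ref{T-main2} (b)) one gets $J(u)=\inf_{\cD}J$, and then, arguing exactly as in the proof of Theorem \ref{T-main1} (resp. of Theorem \ref{T-main2} (b)), $u\in\cS$, i.e. $|u_j|_2=a_j$ for every $j\in\{1,\dots,M\}$. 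Since $|u_j^n|_2\le a_j$ and $u_j^n\rightharpoonup u_j$ in $L^2(\rn)$, lower semicontinuity of the norm gives $a_j=|u_j|_2\le\liminf_n|u_j^n|_2\le\limsup_n|u_j^n|_2\le a_j$, hence $|u_j^n|_2\to|u_j|_2$; combined with the weak convergence this yields $u_j^n\to u_j$ strongly in $L^2(\rn)$ for every $j$.

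It remains to prove $\nabla u^n\to\nabla u$ in $L^2(\rn)^M$, and since $\nabla u^n\rightharpoonup\nabla u$ there it suffices to show $|\nabla u^n|_2\to|\nabla u|_2$. Write $F=F_+-F_-$ with $F_\pm\ge0$. As in the proof of Lemma \ref{L-cbb} (cf. Remark \ref{R-cbb}), (F1) and (F2) provide, for every $\varepsilon>0$, a constant $c_\varepsilon>0$ with $F_+(w)\le c_\varepsilon|w|^2+(\eta_\infty+\varepsilon)|w|^{2_\#}$ for every $w\in\R^M$; since $2<2_\#<2^*$ we have $u^n\to u$ in $L^2(\rn)^M$ and in $L^{2_\#}(\rn)^M$, so generalized dominated convergence gives $\int_{\rn}F_+(u^n)\,dx\to\int_{\rn}F_+(u)\,dx$. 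Using $J(u^n)\to\inf_{\cD}J=J(u)$ we obtain
\[
\tfrac12|\nabla u^n|_2^2+\int_{\rn}F_-(u^n)\,dx=J(u^n)+\int_{\rn}F_+(u^n)\,dx\ \longrightarrow\ J(u)+\int_{\rn}F_+(u)\,dx=\tfrac12|\nabla u|_2^2+\int_{\rn}F_-(u)\,dx.
\]
Since $F_-\ge0$ and $u^n\to u$ a.e., Fatou's lemma gives $\liminf_n\int_{\rn}F_-(u^n)\,dx\ge\int_{\rn}F_-(u)\,dx$, so the above convergence forces $\limsup_n\tfrac12|\nabla u^n|_2^2\le\tfrac12|\nabla u|_2^2$; together with the weak lower semicontinuity bound $\liminf_n|\nabla u^n|_2^2\ge|\nabla u|_2^2$ this yields $|\nabla u^n|_2\to|\nabla u|_2$, whence $u^n\to u$ in $\hrn^M$.

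The main obstacle I expect is precisely the passage to the limit involving the negative part $F_-$: assumption (F0) only bounds $\nabla F$ (hence $F_-$) up to the Sobolev-critical growth $|u|^{2^*-1}$, at which the embedding is not compact, so one cannot pass to the limit in $\int_{\rn}F_-(u^n)\,dx$ directly. The way around it is never to need that limit: only $F_+$, which is strictly subcritical by (F1)--(F2), is passed to the limit, while $F_-$ and $|\nabla\cdot|_2^2$ are treated jointly via lower semicontinuity. The second delicate point is that the conclusion $u\in\cS$ (not merely $u\in\cD$) is indispensable to get convergence of the $L^2$-norms, and this is exactly the nontrivial information imported from Theorems \ref{T-main1} and \ref{T-main2} (b).
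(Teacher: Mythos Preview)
Your argument is correct and follows essentially the same route as the paper's proof: upgrade the $L^p$-compactness from Lemma \ref{L-cptmin}/\ref{L-cptmin2} to $H^1$-compactness by showing separately that $|u_j^n|_2\to|u_j|_2$ (using $u\in\cS$ from Theorem \ref{T-main1}/\ref{T-main2}(b)) and $|\nabla u^n|_2\to|\nabla u|_2$ (using convergence of $\int F_+$, then splitting off $\int F_-$ and $|\nabla\cdot|_2^2$ via Fatou and weak lower semicontinuity). The only cosmetic difference is that the paper establishes the gradient convergence first (pointing to the proof of Lemma \ref{L-min}, where $\int F_+(u_n)\to\int F_+(u)$ is obtained via Brezis--Lieb without needing $L^2$-convergence) and the $L^2$-convergence afterwards, whereas you reverse the order and use the already obtained $L^2$-convergence in the dominated-convergence step for $F_+$; both orderings are valid.
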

\begin{proof}
	Assume first $M=1$ and let $u_n\in\cD$ be such that $\lim_nJ(u_n) = \inf_{\cD}J$ and $u\in\cD$ such that, up to a subsequence and translations, $u_n\rightharpoonup u$ in $\hrn$ and $u_n\to u$ in $L^p(\rn)$, $2<p<2^*$. From the proof of Lemma \ref{L-min} we have that $\lim_n|\nabla u_n|_2 = |\nabla u|_2$, while from the proof of Theorem \ref{T-main1} we have that $a = \lim_n|u_n|_2 \ge |u|_2 = a$. The case $M\ge2$ is analogous.
\end{proof}

\section{On the ground state energy map}\label{gsem}

\begin{Prop}\label{P-gsem}
	Assume that (F0) is verified. We have as follows.
	\begin{itemize}
		\item [(i)] $m\colon]0,\infty[^M\to\R\cup\{-\infty\}$ is nonincreasing, i.e., if $0<a_j\le\bar{a}_j$ for every $j\in\{1,\dots,K\}$, then $m(a)\ge m(\bar{a})$.
		\item [(ii)] If (F1) and (F2) are satisfied and $\eta_0=\infty$, then $\displaystyle\lim_{|a|\to0^+}m(a)=0$.
		\item [(iii)] If (F3) is satisfied and $\eta_\infty=0$, then $\displaystyle\lim_{\substack{a_j\to\infty \\ j=1,\dots,M}}m(a)=-\infty$; in particular, $\displaystyle\lim_{a\to\infty}m(a)=-\infty$ if $M=1$.
		\item [(iv)] In the assumptions of the first part of Theorem \ref{T-main1} or those of the first part of Theorem \ref{T-main2}, $m$ is decreasing, i.e., if $0<a_j\le\bar{a}_j$ for every $j\in\{1,\dots,M\}$ and $a_k<\bar{a}_k$ for some $k\in\{1,\dots,M\}$, then $m(a)>m(\bar{a})$.
	\end{itemize}
\end{Prop}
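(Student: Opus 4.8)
The plan is to settle (i)--(iii) by elementary estimates and (iv) by a scaling/perturbation argument, using that in each of the quoted existence results $m(a)$ is attained at a nontrivial minimizer lying on $\cS$.

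\textbf{Parts (i)--(iii).} Part (i) is immediate: $a_j\le\bar a_j$ for every $j$ forces $\cD(a)\subseteq\cD(\bar a)$, hence $m(\bar a)=\inf_{\cD(\bar a)}J\le\inf_{\cD(a)}J=m(a)$. For (ii) I would note that $F(0)=0$ gives $0\in\cD(a)$ and $m(a)\le J(0)=0$ for all $a$, while repeating the estimate in the proof of Lemma \ref{L-cbb} (with $|u|$ in place of $u$ when $M\ge2$, cf.\ Remark \ref{R-cbb}) gives, for small $\varepsilon>0$ and all $u\in\cD(a)$,
\[
J(u)\ge\Bigl(\tfrac12-(\varepsilon+\eta_\infty)C_{N,2_\#}^{2_\#}|a|^{4/N}\Bigr)|\nabla u|_2^2-c_\varepsilon|a|^2;
\]
for $|a|$ small enough that the bracket is nonnegative this yields $m(a)\ge-c_\varepsilon|a|^2$, and together with $m(a)\le0$ we get $m(a)\to0$ as $|a|\to0^+$. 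For (iii) (where $\eta_\infty=0$ keeps each $m(a)$ finite, cf.\ Lemma \ref{L-cbb}, so the claim is meaningful) I would use (F3) to fix $\kappa_0,\delta>0$ with $F(y)\ge\kappa_0|y|^{2_\#}$ for $0<|y|\le\delta$, pick $\psi\in\cC_c^\infty(\rn)\setminus\{0\}$ with $0\le\psi\le\delta$, and set $\phi:=(\psi,0,\dots,0)$: then $\int_{\rn}F(\phi)\,dx\ge\kappa_0|\psi|_{2_\#}^{2_\#}=:\kappa>0$, $\phi(\cdot/L)\in\cD(a)$ whenever $a_1\ge L^{N/2}|\psi|_2$, and
\[
J\bigl(\phi(\cdot/L)\bigr)=\tfrac12 L^{N-2}|\nabla\psi|_2^2-L^N\kappa\longrightarrow-\infty \quad(L\to\infty),
\]
so $m(a)\to-\infty$ as $\min_j a_j\to\infty$, which for $M=1$ is the last assertion.

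\textbf{Part (iv).} By (i) only strictness needs proof. Let $a\le\bar a$ with $a_k<\bar a_k$ for some $k$ and set $b_j:=\sqrt{\bar a_j^2-a_j^2}$, so $\bar a=\sqrt{a^2+b^2}$ and $b\ne0$. When $M=1$ this is quick: $b>0$, and since $m(a)$ is attained at a nontrivial function, Lemma \ref{L-subadd} (ii) gives $m(\bar a)=m(\sqrt{a^2+b^2})<m(a)+m(b)\le m(a)$, the last step because $m(b)\le J(0)=0$. When $\eta_0=\infty$ (in particular under the first part of Theorem \ref{T-main2} (b)), Remark \ref{R-subadd} (ii) gives $m(\bar a)\le m(a)+m(b)$ while Lemma \ref{L-neg} together with Remark \ref{R-neg} (ii) (using $\eta_0=\infty$) gives $m(b)<0$, hence again $m(\bar a)<m(a)$.

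\textbf{The main obstacle} is the remaining case, the first part of Theorem \ref{T-main2} (a) with $\eta_0<\infty$: then $b$ may have vanishing components, so $m(b)$ need not be negative and the subadditivity argument fails. Here I would instead perturb the ground state $u$ (which, under those hypotheses, is $\cC^2$, solves \eqref{e-main} with $\lambda\in]0,\infty[^M$, and has $|u_j|_2=a_j$ for all $j$) in the $k$-th component only: for $\rho>1$ put $v_\rho:=(u_1,\dots,u_{k-1},u_k(\cdot/\rho),u_{k+1},\dots,u_M)$, so that $|v_{\rho,j}|_2=a_j$ for $j\ne k$, $|v_{\rho,k}|_2^2=\rho^N a_k^2$, and $v_\rho\in\cD(\bar a)$ for $1<\rho\le(\bar a_k/a_k)^{2/N}$. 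A direct computation shows $g(\rho):=J(v_\rho)$ is $\cC^1$ near $\rho=1$; differentiating $|\nabla u_k(\cdot/\rho)|_2^2=\rho^{N-2}|\nabla u_k|_2^2$ and $\int_{\rn}F(v_\rho)\,dx$ (whose derivative at $\rho=1$ equals $-\int_{\rn}\partial_kF(u)(\nabla u_k\cdot x)\,dx$), then inserting the equation $-\Delta u_k+\lambda_k u_k=\partial_kF(u)$ and the Poho\v zaev-type identities $\int_{\rn}(-\Delta u_k)(\nabla u_k\cdot x)\,dx=\tfrac{2-N}{2}|\nabla u_k|_2^2$ and $\int_{\rn}u_k(\nabla u_k\cdot x)\,dx=-\tfrac N2 a_k^2$, the $|\nabla u_k|_2^2$-terms cancel and
\[
g'(1)=-\frac{N}{2}\lambda_k a_k^2<0.
\]
Hence $J(v_\rho)<J(u)=m(a)$ for $\rho>1$ close to $1$, and $m(\bar a)\le J(v_\rho)<m(a)$. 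This one-component dilation covers every case in which the Lagrange multiplier $\lambda_k$ of the enlarged component is positive; it is precisely the possible vanishing of $\lambda_k$ in the bare first part of Theorem \ref{T-main2} (b) that forces the $\eta_0=\infty$ detour above, and making the integrations by parts rigorous (via the regularity and decay of $u$) is the only other point to check.
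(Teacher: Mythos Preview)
Your arguments for (i)--(iii) are essentially the paper's, with cosmetic differences (your (ii) is actually slightly cleaner: you bound $m(a)\ge -c_\varepsilon|a|^2$ directly, while the paper runs a sequence argument; your (iii) uses a one-component test function, the paper a scaling of a full vector --- both work).

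For (iv) you take a genuinely different route. The paper's proof is a two-line contradiction: if $m(a)=m(\bar a)$, then the ground state $u\in\cS(a)$ guaranteed by Theorem~\ref{T-main1} or~\ref{T-main2} satisfies $u\in\cD(\bar a)\setminus\cS(\bar a)$ and $J(u)=m(\bar a)$, so $u$ is a minimizer for $J|_{\cD(\bar a)}$ that fails to lie on $\cS(\bar a)$ --- which the \emph{proofs} of those theorems (not merely their statements) forbid, since they show every minimizer of $J|_{\cD}$ belongs to $\cS$. Your approach instead splits into subcases: strict subadditivity (Lemma~\ref{L-subadd}\,(ii)) for $M=1$; subadditivity plus $m(b)<0$ when $\eta_0=\infty$; and a one-component dilation $v_\rho=(u_1,\dots,u_k(\cdot/\rho),\dots,u_M)$ yielding $g'(1)=-\tfrac{N}{2}\lambda_k a_k^2<0$ for the remaining Theorem~\ref{T-main2}\,(a) case.

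Both arguments are correct. The paper's is shorter and unified, but it tacitly requires the theorems' hypotheses (and their ``every minimizer is on $\cS$'' by-product) at~$\bar a$, not just at~$a$. Your dilation argument is longer but has the advantage of needing the hypotheses only at~$a$: once you have the ground state $u\in\cS(a)$ with $\lambda_k>0$, the computation $g'(1)<0$ gives a competitor in $\cD(\bar a)$ strictly below $m(a)$, regardless of whether $\bar a$ itself satisfies \eqref{e-etas}. The Poho\v zaev-type integrations by parts are justified by the $\cC^2$ regularity and (since $\lambda_j>0$) exponential decay of the ground state established in the proof of Theorem~\ref{T-main2}\,(a), so that last point you flag is indeed routine.
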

\begin{proof}
	\textit{(i)} It is obvious from the definition of $m$.
	
	\textit{(ii)} Note that \eqref{e-etas} holds for every sufficiently small $a$. Fix $\varepsilon>0$. Let $a^n\to0$ and $u^n\in\cD(a^n)$ such that $J(u^n)\le m(a^n)+\varepsilon$. In particular, $u^n\to0$ in $L^2(\rn)^M$ and $u^n\in\cD(\tilde{a})$ for some $\tilde{a}\in]0,\infty[^M$, therefore $u^n$ is bounded in $\hrn^M$ due to Lemma \ref{L-cbb} and Remark \ref{R-cbb}. This, together with (F1), (F2), and \eqref{e-GN}, yields $\int_{\rn} F_+(u^n) \, dx \to 0$, which implies $\liminf_nJ(u^n)=\liminf_n|\nabla u^n|_2^2/2\ge0$, whence, in view also of Lemma \ref{L-neg} and Remark \ref{R-neg}, $0\ge\limsup_nm(a^n)\ge\liminf_nm(a^n)\ge-\varepsilon$. Letting $\varepsilon\to0^+$ we conclude.
	
	\textit{(iii)} Note that \eqref{e-etal} holds for every sufficiently large $a$. Fix $a \in ]0,\infty[^M$, $u \in \cD(a) \cap L^\infty(\rn)^M \setminus \{0\}$, and note that $\int_{\rn}F(u)\,dx>0$ from (F3) provided $|u|_\infty$ is sufficiently small. For every $j\in\{1,\dots,M\}$ let $a_j^n\to\infty$ and denote $b_n:=\max_{j=1,\dots,M}a_j/a_j^n$ and $u^n(x):=u(b_n^{2/N}x)$. Then $\lim_nb_n=0$, $u^n\in\cD(a^n)$, and
	\[
	m(a^n)\le J(u^n)=\frac1{b_n^2}\left(b_n^{4/N}\int_{\rn}\frac12|\nabla u|^2\,dx-\int_{\rn}F(u)\,dx\right)\to-\infty.
	\]
	
	\textit{(iv)} Let $a,\bar{a} \in ]0,\infty[^M$ as in the statement. Clearly $m(a)\ge m(\bar{a})$ from item (i). If $m(a)=m(\bar{a})$, then there exists $u \in \cS(a) \subset \cD(\bar{a}) \setminus \cS(\bar{a})$ such that $J(u) = m(a) = m(\bar{a})$, which is impossible.
\end{proof}

\appendix
\section{Some explicit computations}\label{App}

\begin{Prop}
	Let $M\ge2$ and $\nu_j\ge0$, $j\in\{1,\dots,M\}$, and define
	\[
	F(u)=\sum_{j=1}^M\nu_j|u_j|^{2_\#}.
	\]
	Then $\limsup_{|u|\to\infty}F(u)/|u|^{2_\#}=\max_{j=1,\dots,M}\nu_j$.
\end{Prop}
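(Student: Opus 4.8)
The plan is to establish the two inequalities
\[
\limsup_{|u|\to\infty}\frac{F(u)}{|u|^{2_\#}}\le\nu^*
\qquad\text{and}\qquad
\limsup_{|u|\to\infty}\frac{F(u)}{|u|^{2_\#}}\ge\nu^*
\]
separately, where $\nu^*:=\max_{j=1,\dots,M}\nu_j$.

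For the upper bound I would argue pointwise. For every $u\in\R^M\setminus\{0\}$,
\[
F(u)=\sum_{j=1}^M\nu_j|u_j|^{2_\#}\le\nu^*\sum_{j=1}^M|u_j|^{2_\#}=\nu^*\sum_{j=1}^M\bigl(|u_j|^2\bigr)^{2_\#/2}.
\]
Since $2_\#=2+\frac4N\ge2$, we have $r:=2_\#/2=1+\frac2N\ge1$, so the elementary power-sum inequality $\sum_j t_j^r\le\bigl(\sum_j t_j\bigr)^r$ (valid for $t_j\ge0$ and $r\ge1$), applied with $t_j=|u_j|^2$, gives $\sum_{j=1}^M(|u_j|^2)^{2_\#/2}\le\bigl(\sum_{j=1}^M|u_j|^2\bigr)^{2_\#/2}=|u|^{2_\#}$. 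Hence $F(u)/|u|^{2_\#}\le\nu^*$ for all $u\ne0$, and the upper bound for the $\limsup$ follows immediately.

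For the lower bound I would exhibit a single explicit sequence along which the quotient is constantly equal to $\nu^*$. Pick $k\in\{1,\dots,M\}$ with $\nu_k=\nu^*$ and let $u^n\in\R^M$ be the $M$-tuple with $u^n_k=n$ and $u^n_j=0$ for $j\ne k$. Then $|u^n|=n\to\infty$ while $F(u^n)=\nu^*n^{2_\#}$, so $F(u^n)/|u^n|^{2_\#}=\nu^*$ for every $n$; consequently $\limsup_{|u|\to\infty}F(u)/|u|^{2_\#}\ge\nu^*$. Combining the two bounds yields the stated equality.

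I do not expect any real obstacle here; the argument is entirely elementary. The only step worth a moment's care is the power-sum inequality $\sum_j t_j^r\le(\sum_j t_j)^r$ for $r\ge1$, which follows, e.g., from the superadditivity of $t\mapsto t^r$ on $[0,\infty[$ (equivalently from $\|\cdot\|_{\ell^r}\le\|\cdot\|_{\ell^1}$ on $\R^M$), and from the observation that $2_\#\ge2$ guarantees $2_\#/2\ge1$.
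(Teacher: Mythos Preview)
Your proof is correct and follows essentially the same approach as the paper's: both establish the lower bound by evaluating along a coordinate axis and the upper bound via $F(u)\le\nu^*\sum_j|u_j|^{2_\#}\le\nu^*|u|^{2_\#}$. The only cosmetic difference is that you justify $\sum_j|u_j|^{2_\#}\le|u|^{2_\#}$ directly from the superadditivity of $t\mapsto t^{2_\#/2}$, whereas the paper derives it by showing that $t\mapsto(b_1^t+\cdots+b_M^t)^{1/t}$ is decreasing.
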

\begin{proof}
	By taking $u_j=0$ for every $j\in\{1,\dots,M\}$ but one we easily obtain $\limsup_{|u|\to\infty}F(u)/|u|^{2_\#}\ge\max_{j=1,\dots,M}\nu_j$. Since $F(u)\le\max_{j=1,\dots,M}\nu_j\sum_{j=1}^M|u_j|^{2_\#}$, it suffices to prove that for every $u\in\R^M$
	\[
	\left(|u_1|^{2_\#}+\dots+|u_M|^{2_\#}\right)^{1/{2_\#}}\le\left(u_1^2+\dots+u_M^2\right)^{1/2}.
	\]
	To this aim, we will prove that the function $\varphi\colon]0,\infty[\to]0,\infty[$ is decreasing, where $\varphi(t):=(b_1^t+\dots+b_M^t)^{1/t}$ for some $b_1,\dots,b_M>0$. From
	\[
	\varphi'(t)=\varphi(t)\left(\frac{b_1^t\ln b_1+\dots+b_M^t\ln b_M}{t(b_1^t+\dots+b_M^t)}-\frac1{t^2}\ln(b_1^t+\dots+b_M^t)\right)
	\]
	we have that $\varphi'(t)<0$ is equivalent to
	\[
	b_1^t\ln(b_1^t+\dots+b_M^t)+\dots+b_M^t\ln(b_1^t+\dots+b_M^t)>b_1^t\ln b_1^t+\dots+b_M^t\ln b_M^t,
	\]
	which is true because $\ln$ is increasing.
\end{proof}

\begin{Prop}
	Let $r_1,r_2>1$ such that $r_1+r_2=2_\#$ and define
	\[
	F(u)=|u_1|^{r_1}|u_2|^{r_2}.
	\]
	Then $\limsup_{|u|\to\infty}F(u)/|u|^{2_\#}=\sqrt{r_1^{r_1}r_2^{r_2}/2_\#^{2_\#}}$.
\end{Prop}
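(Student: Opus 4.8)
The plan is to exploit the fact that, since $r_1+r_2=2_\#$, the quotient $u\mapsto F(u)/|u|^{2_\#}$ is positively homogeneous of degree $0$ on $\R^2\setminus\{0\}$, hence constant along every ray emanating from the origin. Consequently
\[
\limsup_{|u|\to\infty}\frac{F(u)}{|u|^{2_\#}}=\sup_{u\in\R^2\setminus\{0\}}\frac{F(u)}{|u|^{2_\#}},
\]
so it suffices to compute this supremum and to exhibit a sequence with $|u|\to\infty$ along which it is attained (the latter being automatic by homogeneity, once one has a single point where the value is reached).

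For the upper bound I would use the weighted arithmetic--geometric mean inequality with weights $p:=r_1/2_\#$ and $q:=r_2/2_\#$, so that $p,q>0$ and $p+q=1$. Writing $F(u)=(u_1^2)^{r_1/2}(u_2^2)^{r_2/2}=\bigl[(u_1^2)^p(u_2^2)^q\bigr]^{2_\#/2}$ and applying weighted AM--GM to $A:=\frac{2_\#}{r_1}u_1^2$ and $B:=\frac{2_\#}{r_2}u_2^2$ gives $A^pB^q\le pA+qB=u_1^2+u_2^2$, whence $(u_1^2)^p(u_2^2)^q\le(r_1/2_\#)^p(r_2/2_\#)^q\,|u|^2$. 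Raising both sides to the power $2_\#/2$ yields
\[
F(u)\le\sqrt{r_1^{r_1}r_2^{r_2}/2_\#^{2_\#}}\;|u|^{2_\#}\qquad\text{for every }u\in\R^2.
\]

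For the matching lower bound, note that equality in weighted AM--GM holds precisely when $A=B$, i.e. when $u_2^2/u_1^2=r_2/r_1$; taking for instance $u=(\sqrt{r_1}\,t,\sqrt{r_2}\,t)$ with $t\to\infty$ one has $|u|=t\sqrt{2_\#}\to\infty$ and $F(u)/|u|^{2_\#}=\sqrt{r_1^{r_1}r_2^{r_2}/2_\#^{2_\#}}$ identically, so the $\limsup$ is at least this value. (Alternatively, one could parametrize by $u=(\cos\theta,\sin\theta)$, maximize $\theta\mapsto\cos^{r_1}\theta\,\sin^{r_2}\theta$ by differentiating its logarithm, and find the critical point at $\cos^2\theta=r_1/2_\#$, $\sin^2\theta=r_2/2_\#$, giving the same constant.) I do not expect a genuine obstacle here; the only point requiring a moment's care is the initial reduction via $0$-homogeneity, which turns the $\limsup$ into an honest maximization problem and thereby makes the elementary inequality above both necessary and sufficient.
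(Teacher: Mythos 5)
Your proposal is correct and takes essentially the same route as the paper: both exploit the $0$-homogeneity of $u\mapsto F(u)/|u|^{2_\#}$ to turn the $\limsup$ into a global maximization, identify the maximal value $\sqrt{r_1^{r_1}r_2^{r_2}/2_\#^{2_\#}}$ attained where $u_1^2:u_2^2=r_1:r_2$, and realize it along a ray going to infinity. The only (harmless) difference is that the paper maximizes the one-variable function $\varphi(t)=t^{r_1}/(t^2+1)^{2_\#/2}$ obtained by setting $t=|u_1/u_2|$, whereas you derive the same sharp upper bound via weighted AM--GM; both are complete elementary arguments.
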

\begin{proof}
	Observe that, for $u_2\ne0$, $F(u)/|u|^{2_\#}=\varphi(|u_1/u_2|)$, with $\varphi(t):=t^{r_1}/(t^2+1)^{2_\#}$. Since $\max\varphi=\varphi(\sqrt{r_1/r_2})=\sqrt{r_1^{r_1}r_2^{r_2}/2_\#^{2_\#}}$, there holds
	\[
	\sqrt{\frac{r_1^{r_1}r_2^{r_2}}{2_\#^{2_\#}}}\ge\limsup_{|u|\to\infty}\frac{F(u)}{|u|^{2_\#}}\ge\limsup_{\substack{u_1=\sqrt{r_1/r_2}u_2\\ |u_2|\to\infty}}\frac{F(u)}{|u|^{2_\#}}=\sqrt{\frac{r_1^{r_1}r_2^{r_2}}{2_\#^{2_\#}}}.\qedhere
	\]
\end{proof}

\section*{Acknowledgments}

The author was partly supported by the National Science Centre, Poland (Grant No. 2017/26/E/ST1/00817). The author would also like to thank Jaros{\l}aw Mederski and Panayotis Smyrnelis for interesting discussions and suggestions.

\end{document}